\newcommand{\CC}{\mathbb{C}}
\newcommand{\RR}{\mathbb{R}}
\newcommand{\ZZ}{\mathbb{Z}}
\newcommand{\NN}{\mathbb{N}}
\newcommand{\VV}{\mathcal{V}}
\newcommand{\rank}{\textup{rank}\,}
\newcommand{\conv}{\textup{conv}}
\newcommand{\xx}{\mathbf x}
\newcommand{\s}{\mathbf s}
\newcommand{\uu}{\mathbf u}
\newcommand{\vv}{\mathbf v}
\renewcommand{\tt}{\mathbf t}
\renewcommand{\aa}{\mathbf a}
\newcommand{\mcA}{\mathcal A}
\newcommand{\bb}{\mathbf b}
\newcommand{\mb}[1]{\boldsymbol #1}
\newtheorem{theorem}{Theorem}[section]
\newtheorem{lemma}[theorem]{Lemma}
\newtheorem{corollary}[theorem]{Corollary}
\newtheorem{proposition}[theorem]{Proposition}
\theoremstyle{definition}
\newtheorem{definition}[theorem]{Definition}
\newtheorem{example}[theorem]{Example}
\theoremstyle{remark}
\newtheorem{problem}[theorem]{Problem}
\title[Projectively unique polytopes and toric slack ideals]{Projectively unique polytopes\\ and toric slack ideals}
\author[Gouveia]{Jo{\~a}o Gouveia}
\address{CMUC, Department of Mathematics,
  University of Coimbra, 3001-454 Coimbra, Portugal}
\email{jgouveia@mat.uc.pt}
\author[Macchia]{Antonio Macchia}
\address{Discrete Geometry Group, Freie Universit\"at Berlin,
Arnimallee 2, 14195 Berlin, Germany}
\email{macchia.antonello@gmail.com}
\author[Thomas]{Rekha R. Thomas}
\address{Department of Mathematics, University of Washington, Box
  354350, Seattle, WA 98195, USA} \email{rrthomas@uw.edu}
\author[Wiebe]{Amy Wiebe}
\address{Department of Mathematics, University of Washington, Box
  354350, Seattle, WA 98195, USA} \email{awiebe@uw.edu}
\thanks{Gouveia was partially supported by FCT under grants UID/MAT/00324/2013 through CMUC, and P2020 SAICTPAC/0011/2015,
Macchia was supported by INdAM, Thomas by
the U.S. National Science Foundation grants DMS-1418728 and DMS-1719538, and Wiebe by NSERC}
\date{\today}
\begin{document}
\sloppy

\begin{abstract}
The slack ideal of a polytope is a saturated determinantal ideal that gives rise to a new model for the
realization space of the polytope. The simplest slack ideals are toric and have connections to projectively
unique polytopes. We prove that if a projectively unique polytope has a toric slack ideal, then
it is the toric ideal of the bipartite graph of vertex-facet non-incidences of the polytope. The slack ideal of a polytope is
contained in this toric ideal if and only if the polytope is morally $2$-level, a generalization of the $2$-level property
in polytopes. We show that polytopes that do not admit rational realizations cannot have toric slack ideals. A classical example of a projectively unique polytope with no rational realizations is due to Perles. We prove that the slack ideal of the Perles polytope is reducible, providing the first example of a slack ideal that is not prime.
\end{abstract}

\keywords{polytopes; slack matrix; slack ideal; realization spaces; toric ideal; projectively unique polytopes}

\maketitle


%
%


\section{Introduction} \label{sec:introduction}

An important focus in the study of polytopes is the investigation of their realization spaces. Given a $d$-polytope $P \subset \RR^d$, its face lattice determines its combinatorial type. The realization space of $P$ is the set of all geometric realizations of polytopes in the combinatorial class of $P$.
A new model for the realization space of a polytope modulo projective transformations, called the {\em slack realization space}, was introduced in \cite{GMTWfirstpaper}. This model arises as the positive part of the
real variety of $I_P$, the {\em slack ideal} of $P$, which is a saturated determinantal ideal of a symbolic matrix whose zero pattern encodes the combinatorics of $P$. The slack ideal and slack realization space were extended to
matroids in \cite{BW18}.

The overarching goal of this paper is to initiate a study of the algebraic and geometric properties of slack ideals as they provide the main computational engine in our model of realization spaces. As shown in \cite{GMTWfirstpaper}, slack ideals can be used to answer many different questions about the realizability of polytopes. These ideals were introduced in \cite{GPRT17} where they were used to study the notion of {\em psd-minimality} of polytopes, a property of interest in optimization. Thus, developing the properties and understanding the implications of slack ideals can
directly impact both polytope and matroid theory.  Even as a purely theoretical object, slack ideals present a
new avenue for research in commutative algebra.

In this paper, we focus on the simplest possible slack ideals, namely, toric slack ideals.
Since slack ideals do not contain monomials, the simplest ones are generated by binomials. Toric ideals are precisely those binomial ideals that are prime. Toric slack ideals already form a rich class with important connections to projective uniqueness. In general, slack ideals offer a new classification scheme for polytopes via the algebraic properties and invariants of the ideal, and the toric case offers a nice example of this.
The vertex-facet (non)-incidence structure of a polytope~$P$ can be encoded in a bipartite graph whose toric ideal, $T_P$, plays a special role in this context. We call $T_P$ the {\em toric ideal of the {non-incidence} graph of} $P$,
and say that $I_P$ is {\em graphic} if it coincides with $T_P$. In Theorem~\ref{thm:graphic} we prove that $I_P$ is graphic if and only if $I_P$ is toric and $P$ is projectively unique. On the other hand,
there are infinitely many combinatorial types in high enough dimension that are projectively unique but do not have toric slack ideals, as well as non-projectively unique polytopes with toric slack ideals. We give several concrete examples.

The toric ideal $T_P$ has other interesting geometric connections. We prove that
$I_P$ is contained in $T_P$ if and only if $P$ is {\em morally 2-level},
which is a polarity-invariant property of a polytope that generalizes the notion of $2$-level polytopes \cite{StanleyCompressed}, \cite{BFFFMP17}, \cite{FFM16}, \cite{GS17}. Theorem~\ref{thm:morally 2-level} characterizes morally 2-level polytopes in terms of the slack variety. As a consequence we get that a polytope with no
rational realizations cannot have a toric slack ideal.

An important feature of a toric ideal is that the positive part of its real variety is Zariski dense in its complex variety. This implies that the toric ideal is the vanishing ideal of the positive part of its variety. In general, it is not easy to determine whether $I_P$ is the vanishing ideal of the positive part $\VV_+(I_P)$, of its variety $\VV(I_P)$.
We show that the slack ideal of a classical polytope due to Perles is reducible and that in this case, $\VV_+(I_P)$ is not Zariski dense in $\VV(I_P)$. This eight-dimensional polytope is projectively unique and does not have rational realizations.  It provides the first concrete instance of a slack ideal that is not prime.

{\bf Organization of the paper.}
In Section~\ref{sec:bg} we summarize the needed background on slack ideals of polytopes.
In Section~\ref{sec:Toric} we introduce $T_P$, the toric ideal of the non-incidence graph of a polytope $P$, and show its relationship to pure difference binomial slack ideals and morally $2$-level polytopes. We prove in Section~\ref{sec:PU} that slack ideals are graphic if and only if they are toric and the underlying polytope is projectively unique. In particular, we show that all $d$-polytopes with $d+2$ vertices or facets have graphic slack ideals, but this property holds beyond this class.  In this section we also illustrate toric slack ideals that do not come from projectively unique polytopes and the existence of projectively unique polytopes that do not have
toric slack ideals. We conclude in Section~\ref{sec:Perles} with the Perles polytope \cite[Section 5.5]{Grunbaum}. We show that the Perles polytope has a reducible slack ideal despite being projectively unique, providing the first concrete example of a non-prime slack ideal. In this case,
$\VV_+(I_P)$ is not Zariski dense in $\VV(I_P)$.

{\bf Acknowledgements.} We thank Arnau Padrol, David Speyer and G\"unter Ziegler for helpful conversations.
We also thank Marco Macchia\break for providing us with a list of known $2$-level polytopes, available at
\url{https://mmacchia90.bitbucket.io/data.html}, that helped us find interesting examples and counterexamples. We are indebted to the \texttt{SageMath} \cite{SageMath}, \texttt{Macaulay2} \cite{M2} and \texttt{Maple} \cite{Maple}  software systems for the computations in this paper.


%
%

\section{Background: Slack Matrices and Ideals of Polytopes} \label{sec:bg}

We now give a brief introduction to slack matrices and slack ideals of polytopes.
For more details see \cite{slackmatrixpaper}, \cite{GPRT17} and \cite{GMTWfirstpaper}.

A $d$-dimensional polytope $P \subset \RR^d$ with $v$ {labelled}  vertices
and $f$ {labelled}  facet inequalities has two usual representations: a $\VV$-representation $P = \conv\{\mb{p}_1,\ldots, \mb{p}_v\}$ as the convex hull of vertices, and an $\mathcal{H}$-representation $P = \{\xx\in\RR^d : W\xx \leq \mb{w}\}$ as the intersection of the half spaces defined by the facet inequalities $W_j \xx \leq \mb{w}_j$, $j=1,\ldots, f$, where $W_j$ denotes the $j$th row of $W \in \RR^{f \times d}$.
Let $V \in \RR^{v \times d}$ be the matrix with rows ${\mb{p}_1}^\top,\ldots, {\mb{p}_v}^\top$,  and let
$\mathbbm{1} \in \RR^v$ be the vector of all ones.  The combined data of the two representations yields a {\em slack matrix} of $P$, defined as

\begin{equation} \label{EQ:slackdef} S_P := \left[\begin{array}{cc} \mathbbm{1} & V\\
 \end{array}\right] \left[\begin{array}{c} \mb{w}^\top \\ -W^\top \end{array}\right] \in\RR^{v\times f}. \end{equation}
 Since scaling the facet inequalities by positive real numbers does not change the polytope, $P$ in fact has
 infinitely many slack matrices of the form $S_P D_f$ where $D_f$ denotes a $f \times f$ diagonal
 matrix with positive entries on the diagonal. Also, affinely equivalent polytopes have the same set of slack
 matrices.

 Slack matrices were introduced in \cite{Yannakakis}.  The $(i,j)$-entry of $S_P$ is $\mb{w}_j - W_j\mb{p}_i$ which is the {\em slack} of the
 $i$th vertex $\mb{p}_i$ of $P$ with respect to the $j$th facet inequality $W_j^\top \xx \leq w_j$ of $P$. Since $P$ is a $d$-polytope, $\textup{rank}(\left[\begin{array}{cc} \mathbbm{1} & V\\
 \end{array}\right]) = d+1$, and hence, $\rank(S_P) = d+1$. Also, $\mathbbm{1}$ is in the column span of $S_P$. Further, the zeros in $S_P$ record the vertex-facet incidences of $P$, and hence the entire combinatorics (face lattice) of $P$. Interestingly, it follows from \cite[Theorem 22]{slackmatrixpaper} that any matrix with the above properties is in fact the slack matrix of a
 polytope that is combinatorially equivalent to $P$.

 \begin{theorem}
A nonnegative matrix $S \in \RR^{v \times f}$ is the slack matrix of a polytope in the combinatorial class of the labelled polytope $P$
if and only if the following hold:
\begin{enumerate}
\item $\textup{support}(S) = \textup{support}(S_P)$, \label{EQ:support}
\item $\rank(S) = \rank(S_P) = d+1$, and  \label{EQ:rank}
\item $\mathbbm{1}$ lies in the column span of $S$. \label{EQ:colspan}
\end{enumerate}
\label{THM:slackconditions}\end{theorem}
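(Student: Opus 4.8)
The plan is to prove necessity by direct inspection and sufficiency by constructing a polytope from a rank factorization of $S$. \emph{Necessity.} If $S$ is the slack matrix of a polytope $Q$ in the combinatorial class of $P$, then $\rank(S) = \dim Q + 1 = d+1$; the zero entries of a slack matrix record exactly the vertex--facet incidences, a purely combinatorial datum, so $\supp(S) = \supp(S_P)$; and $\mathbbm{1}$ lies in the column span of $S$ because $Q$ is bounded, so its facet normals positively span $\RR^d$ and there is a nonzero $\lambda \ge 0$ with $\sum_j \lambda_j W_j = 0$, whence the corresponding nonnegative combination of the columns of $S$ equals $\bigl(\sum_j \lambda_j \mb{w}_j\bigr)\mathbbm{1}$, a positive multiple of $\mathbbm{1}$ since $\sum_j \lambda_j(\mb{w}_j - W_j x) > 0$ at an interior point $x$ of $Q$.

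\emph{Sufficiency: setup.} Assume the three conditions. Using the rank and column-span conditions, choose a factorization $S = [\,\mathbbm{1} \mid V\,]\,M$ with $[\,\mathbbm{1} \mid V\,] \in \RR^{v\times(d+1)}$ of rank $d+1$ and first column $\mathbbm{1}$ (possible since $\mathbbm{1}$ is a nonzero element of the $(d+1)$-dimensional column span of $S$) and $M \in \RR^{(d+1)\times f}$ of rank $d+1$; let $\mb{w}^\top$ be the first row of $M$, let the remaining $d$ rows of $M$ be $-W^\top$ (so $W \in \RR^{f\times d}$ has rows $W_j$), let $\mb{v}_i^\top$ be the $i$th row of $V$, and set $Q := \conv\{\mb{v}_1,\dots,\mb{v}_v\}$ and $R := \{x\in\RR^d : Wx\le\mb{w}\}$. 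Since $S_{ij} = \mb{w}_j - W_j\mb{v}_i \ge 0$ we get $Q \subseteq R$; since $[\,\mathbbm{1}\mid V\,]$ has rank $d+1$ the $\mb{v}_i$ affinely span $\RR^d$, so $\dim Q = d$; and $S_{ij} = 0$ exactly when $(S_P)_{ij} = 0$, i.e.\ exactly when vertex $i$ of $P$ lies on facet $j$. It then suffices to show that $\mb{p}_i \mapsto \mb{v}_i$ together with $(\text{facet }j) \mapsto (W_j x \le \mb{w}_j)$ extends to a combinatorial isomorphism $P \cong Q$, since by construction $S_Q = \mathbbm{1}\mb{w}^\top - VW^\top = S$.

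\emph{Sufficiency: the isomorphism.} I would establish the isomorphism by induction on $d$. The key structural point is that, $M$ having full row rank, the rank of any row-submatrix of $S$ equals the rank of the corresponding row-submatrix of $[\,\mathbbm{1}\mid V\,]$, so rank statements about $S$ translate into affine-span statements about the configuration $\{\mb{v}_i\}$. Two combinatorial facts get used repeatedly: every face of a polytope is the intersection of the facets containing it, so if the support of row $i$ of $S$ is contained in the support of row $k$ and $k$ indexes a vertex, then $i = k$ (hence no $\mb{v}_i$ is redundant and no incidence is created spuriously); and for a facet $F$ of $P$, the restriction of $S$ to the rows indexed by the vertices of $F$ has the same support as the corresponding restriction of $S_P$, which encodes the $(d-1)$-polytope $F$. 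Granting that for each facet $F = F_j$ the points $\mb{v}_i$ with $\mb{p}_i \in F$ affinely span the hyperplane $\{x : W_j x = \mb{w}_j\}$, that inequality is facet-defining for $Q$, the inductive hypothesis identifies $Q \cap \{x : W_j x = \mb{w}_j\}$ combinatorially with $F$, and one checks that $\partial Q$ is covered by these facets; hence $Q = R$ and the two face lattices coincide.

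\emph{The main obstacle.} The hard part is exactly the spanning statement granted above: that the three hypotheses force, for every facet $F_j$ of $P$, the points $\mb{v}_i$ with $\mb{p}_i \in F_j$ to affinely span a hyperplane (equivalently, that the row-submatrix of $S$ on those rows has rank $d$). Without it, a nonnegative matrix with the same support and rank as $S_P$ and with $\mathbbm{1}$ in its column span could still yield a configuration in which the vertices assigned to some facet collapse into a lower-dimensional flat, or in which $R$ is unbounded; excluding this requires using the support, rank and column-span hypotheses together and globally rather than facet by facet, and is the substance of \cite[Theorem 22]{slackmatrixpaper}.
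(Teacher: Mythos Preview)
The paper does not give its own proof of this theorem: it is stated as background in Section~\ref{sec:bg} and attributed to \cite[Theorem~22]{slackmatrixpaper}. So there is nothing in the paper to compare your argument against beyond that citation.

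Your proposal is a reasonable outline of how such a proof goes, and you are honest about where the real work lies. A couple of remarks. For necessity, your argument that $\mathbbm{1}$ lies in the column span is correct but unnecessarily indirect: since both factors in~\eqref{EQ:slackdef} have rank $d+1$, the column span of $S_P$ equals the column span of $[\,\mathbbm{1}\mid V\,]$, which visibly contains $\mathbbm{1}$. For sufficiency, you correctly isolate the crux---that the vertices assigned to each facet must affinely span a hyperplane, and more globally that $R$ is bounded and equals $Q$---and then defer it to \cite[Theorem~22]{slackmatrixpaper}. That is exactly what the paper does, so in the end your proposal and the paper rest on the same external result; you have added a helpful sketch of the reduction, but not an independent proof.
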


{This theorem gives rise to a new model for the realization space of $P$, as observed in \cite{GPRT17} and \cite{D14}.
  We briefly explain the construction of the slack model for the
realization space of $P$ from \cite{GPRT17}, developed further in \cite{GMTWfirstpaper}.
}

The {\em symbolic slack matrix}, $S_P(\xx)$, of $P$ is obtained by replacing the nonzero entry of $S_P$ in position
$(i,j)$ by the variable $x_{i,j}$. We assume that there are $t$ variables $x_{i,j}$ and let $\xx$ denote the collection of all $x_{i,j}$ (namely, those indexed by vertices $\mb{p}_i$ and facets $F_j$ with $\mb{p}_i\notin F_j$). 
The {\em slack ideal} of $P$ is the saturation of the ideal generated by the $(d+2)$-minors of $S_P(\xx)$, namely
\begin{equation} I_P := \langle (d+2)\text{-minors of }S_P(\xx) \rangle :\left(\prod_{(i,j): \mb{p}_i \notin F_j} x_{i,j}\right)^\infty \subset \CC[\xx].
\label{EQ:slackidealdef} \end{equation}
Note that since $I_P$ is saturated, it does not contain any monomials.
The {\em slack variety} of $P$ is the complex variety
$\mathcal{V}(I_P) \subset \CC^t$.
If $\s \in \CC^t$ is a zero of $I_P$, then we identify it with the matrix
$S_P(\s)$.

{By \cite[Corollary 1.5]{GPRT17}, two polytopes $P$ and $Q$ in the same combinatorial class are projectively equivalent if and only if $D_vS_PD_f$ is a slack matrix of $Q$ for some positive diagonal matrices $D_v,D_f$.
Using this fact and Theorem~\ref{THM:slackconditions}, we see that the positive part of $\VV(I_P)$, namely $\VV(I_P) \cap \RR^t_{>0} =: \VV_+(I_P)$,  leads to a realization space for $P$, modulo projective transformations.}

\begin{theorem} \cite{GMTWfirstpaper} Given a polytope $P$, there is a bijection
between the elements of $\VV_+(I_P)/(\RR^v_{>0}\times\RR^f_{>0})$ and the classes of projectively equivalent polytopes in the combinatorial class of $P$.
\end{theorem}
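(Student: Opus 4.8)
The plan is to write the bijection down by hand. In one direction, send a point $\s$ of $\VV_+(I_P)$ to the projective class of the polytope that Theorem~\ref{THM:slackconditions} attaches to a suitable positive rescaling of $S_P(\s)$; in the other, send the projective class of a polytope $Q$ (labelled so that it sits in the combinatorial class of $P$) to the torus orbit of the tuple of nonzero entries of any slack matrix of $Q$. Two points need real work: (i) that the rescaled matrix really satisfies the three conditions of Theorem~\ref{THM:slackconditions}, so that it is an honest slack matrix; and (ii) that the $(\RR^v_{>0}\times\RR^f_{>0})$-action on $\VV_+(I_P)$ matches up exactly with the relation ``projectively equivalent'', for which the input is \cite[Corollary~1.5]{GPRT17}.

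First I would pin down what $\VV_+(I_P)$ is as a set of matrices. If $\s\in\VV_+(I_P)$ then all coordinates of $\s$ are nonzero, so $\supp(S_P(\s))=\supp(S_P)$; and because saturating by $\prod_i x_i$ only deletes components contained in coordinate hyperplanes, a point with no vanishing coordinate lies in $\VV(I_P)$ if and only if all $(d+2)$-minors of $S_P(\s)$ vanish, i.e.\ $\rank S_P(\s)\le d+1$. Hence $\VV_+(I_P)$ is precisely the set of positive tuples $\s$ for which $S_P(\s)$ has the support of $S_P$ and rank at most $d+1$.

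Next I would promote such an $\s$ to a slack matrix. The rank has to be exactly $d+1$; the inequality $\rank S_P(\s)\ge d+1$ is what must be extracted from the fact that the zero pattern $\supp(S_P)$ is the vertex--facet (non)incidence pattern of the $d$-polytope $P$, so it fixes the whole face lattice \cite{Joswig} and in particular the dimension. Once the rank is $d+1$, the condition $\mathbbm{1}\in\operatorname{colspan}$ comes for free: the vector $S_P(\s)\mathbbm{1}$ of row sums is strictly positive (each vertex of $P$ misses some facet), so scaling the rows by its reciprocal entries --- an action by a suitable $D_v\in\RR^v_{>0}$ --- puts $\mathbbm{1}$ in the column span without changing the support or the rank. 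Theorem~\ref{THM:slackconditions} now says $D_v S_P(\s)$ is the slack matrix $S_Q$ of a polytope $Q$ combinatorially equivalent to $P$ (well defined up to affine equivalence). Conversely, by Theorem~\ref{THM:slackconditions} every slack matrix of every such $Q$ has support $\supp(S_P)$, rank $d+1$ and $\mathbbm{1}$ in its column span, so reading off its entries in the $t$ variable slots gives a point of $\VV_+(I_P)$.

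Finally I would check that both assignments respect the quotients and are mutually inverse. The slack matrices of a fixed $Q$ are exactly $S_Q D_f$ with $D_f\in\RR^f_{>0}$, so the tuple of nonzero entries of a slack matrix of $Q$ is well defined up to the $\RR^f_{>0}$-factor of the torus; and $Q,Q'$ (both in the class of $P$) are projectively equivalent if and only if $D_v S_Q D_f$ is a slack matrix of $Q'$ for some positive diagonal $D_v,D_f$ (\cite[Corollary~1.5]{GPRT17}), i.e.\ if and only if their tuples lie in a common $(\RR^v_{>0}\times\RR^f_{>0})$-orbit; so $Q\mapsto[\s]$ is well defined and injective on projective classes. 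In the reverse direction, the polytope extracted from $\s$ does not depend on the choice of row rescaling up to projective equivalence (two choices differ by the torus), and composing the two constructions in either order returns the starting orbit, respectively the starting projective class, again by \cite[Corollary~1.5]{GPRT17}. I expect the main obstacle to be the rank lower bound in the third paragraph --- i.e.\ showing that no positive matrix with zero pattern $\supp(S_P)$ drops below rank $d+1$, so that the combinatorics really do force the dimension; the rest is bookkeeping with the torus action and the two cited facts.
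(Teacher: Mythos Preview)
The paper does not actually prove this theorem: it is quoted from \cite{GMTWfirstpaper}, and the only argument the present paper offers is the two-sentence sketch immediately preceding the statement, namely that the result follows by combining Theorem~\ref{THM:slackconditions} with \cite[Corollary~1.5]{GPRT17}. Your proposal fleshes out precisely that sketch --- you use Theorem~\ref{THM:slackconditions} to pass between points of $\VV_+(I_P)$ and slack matrices of polytopes in the combinatorial class, and \cite[Corollary~1.5]{GPRT17} to identify the $(\RR^v_{>0}\times\RR^f_{>0})$-orbits with projective equivalence classes --- so your approach is exactly the one the paper indicates.

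Your identification of the rank lower bound as the one substantive step is accurate: everything else is, as you say, bookkeeping with the torus action. That lower bound (no positive matrix with the support of $S_P$ can have rank $\le d$) is established in \cite{GMTWfirstpaper} via a flag-of-faces argument producing a triangular $(d{+}1)\times(d{+}1)$ submatrix with nonzero diagonal; your appeal to the face lattice being determined by the support is pointing in the right direction but would need exactly this refinement to be complete.
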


The space $\VV_+(I_P)/(\RR^v_{>0}\times\RR^f_{>0})$ is called the {\em slack realization space} of $P$.


%
%

\section{The toric ideal of the non-incidence graph of a polytope}
\label{sec:Toric}

We begin by defining the toric ideal $T_P$ of the non-incidence graph of a polytope~$P$.
In the next section we characterize when $T_P$ equals $I_P$ which relies on the projective uniqueness of $P$.
In this section we  examine the relationship between $I_P$ and $T_P$ and the
 implications of $I_P$ being contained in $T_P$.

First we recall the definition of a toric ideal. Let $\mcA=\{\aa_1, \dots, \aa_n\}$ be a point configuration in $\ZZ^d$. Sometimes we will identify $\mcA$ with the $d \times n$ matrix whose columns are the vectors $\aa_i$. Consider the $\CC$-algebra homomorphism
\[
\pi:\CC[x_1,\dots,x_n] \rightarrow \CC[t_1^{\pm 1},\dots,t_d^{\pm 1}], \text{ such that } x_j \mapsto \mathbf t^{\aa_j}.
\]
The kernel of $\pi$, denoted by $I_\mcA$, is called the \textit{toric ideal} of $\mcA$. The ideal $I_\mcA$ is binomial and prime (see \cite[Chapter 4]{Stu96}). More precisely, $I_\mcA$ is generated by homogeneous binomials:
\begin{equation}\label{eq:toricGens}
I_\mcA = \langle \xx^{\uu^+}-\xx^{\uu^-} \in \CC[x_1,\dots,x_n] : \uu \in \ker_{\ZZ}(\mcA)\rangle,
\end{equation}
where $\ker_{\ZZ}(\mcA)=\{\uu \in \ZZ^n : \mcA\uu=\mathbf 0\}$, $\uu=\uu^+ - \uu^-$, with $\uu^+,\uu^- \in \ZZ^n_{\geq 0}$ the positive and the negative parts of $\uu$.

Let $I_\mcA$ be a toric ideal and $V_\mcA=\mathcal V_\CC(I_\mcA)$ be its complex affine toric variety which is the Zariski closure of the set of points $\{(\tt^{\aa_1},\dots,\tt^{\aa_n}) : \tt \in (\CC^*)^d\}$. Define
\[
\phi_\mcA : (\CC^*)^d \rightarrow \CC^n, \qquad \tt \mapsto (\tt^{\aa_1},\dots,\tt^{\aa_n}),
\]
so that $V_\mcA=\overline{\phi_\mcA((\CC^*)^d)}$. We are interested in the positive part of $V_\mcA$, namely, $V_\mcA \cap \RR^n_{>0}$. Note that this set contains $\phi_\mcA(\RR^d_{>0})$.

The following result follows from the Zariski density of the positive part of a toric variety in its complex variety. However, we write an independent proof.

\begin{lemma}\label{lem:toricBinomials}
Let $I_\mcA$ be a toric ideal in $\CC[x_1,\dots,x_n]$. If $\uu,\vv \in \NN^n$ and $\xx^\uu-\xx^\vv$ vanishes on the set of points $\phi_\mcA(\RR^d_{>0})$, then $\xx^\uu-\xx^\vv \in I_\mcA$.
\end{lemma}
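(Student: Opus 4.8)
The plan is to show that the binomial $\xx^\uu - \xx^\vv$, which is assumed to vanish on $\phi_\mcA(\RR^d_{>0})$, actually lies in $\ker \pi = I_\mcA$. Since $\pi(\xx^\uu - \xx^\vv) = \tt^{\mcA\uu} - \tt^{\mcA\vv}$, it suffices to prove $\mcA\uu = \mcA\vv$, i.e. $\uu - \vv \in \ker_\ZZ(\mcA)$, because then $\xx^\uu - \xx^\vv$ is exactly one of the generators listed in \eqref{eq:toricGens}. So the whole problem reduces to extracting the linear identity $\mcA\uu = \mcA\vv$ from the hypothesis that the binomial vanishes identically on the positive part of the parametrized torus.

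The key step is to substitute the parametrization into the binomial. The hypothesis says that for every $\tt \in \RR^d_{>0}$ we have $(\tt^{\aa_1},\dots,\tt^{\aa_n})^\uu = (\tt^{\aa_1},\dots,\tt^{\aa_n})^\vv$, which unwinds to $\prod_{j=1}^n (\tt^{\aa_j})^{u_j} = \prod_{j=1}^n (\tt^{\aa_j})^{v_j}$, that is, $\tt^{\sum_j u_j \aa_j} = \tt^{\sum_j v_j \aa_j}$, i.e. $\tt^{\mcA\uu} = \tt^{\mcA\vv}$ for all $\tt \in \RR^d_{>0}$. Writing $\mcA\uu = (p_1,\dots,p_d)$ and $\mcA\vv = (q_1,\dots,q_d)$ in $\ZZ^d$, this means $\prod_{i=1}^d t_i^{p_i} = \prod_{i=1}^d t_i^{q_i}$ for all positive reals $t_i$. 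The final step is to conclude $p_i = q_i$ for each $i$: for instance, set all coordinates equal to $1$ except the $i$th and let $t_i$ range over $(0,\infty)$; the resulting one-variable identity $t_i^{p_i} = t_i^{q_i}$ on an infinite set of reals forces $p_i = q_i$ (e.g. take logarithms to get $(p_i - q_i)\log t_i = 0$ and pick $t_i \neq 1$). Hence $\mcA\uu = \mcA\vv$, so $\uu - \vv \in \ker_\ZZ(\mcA)$ and $\xx^\uu - \xx^\vv \in I_\mcA$ by \eqref{eq:toricGens}.

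There is really no serious obstacle here; the lemma is a clean unwinding of definitions. The only point requiring a moment's care is that vanishing on the image of $\phi_\mcA$ restricted to \emph{positive reals} already suffices — one does not need all of $(\CC^*)^d$ — and this is precisely what the variable-by-variable substitution argument above exploits: a Laurent monomial identity holding on a set of the form $\{1\}^{d-1} \times (0,\infty)$ in one slot at a time already pins down every exponent. I would present it in essentially three lines: substitute, rewrite as $\tt^{\mcA\uu} = \tt^{\mcA\vv}$ on $\RR^d_{>0}$, deduce $\mcA\uu = \mcA\vv$ coordinatewise, and invoke \eqref{eq:toricGens}.
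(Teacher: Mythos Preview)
Your proof is correct and follows essentially the same approach as the paper's: substitute the parametrization to reduce to $\tt^{\mcA\uu}=\tt^{\mcA\vv}$ on $\RR^d_{>0}$, then specialize all but one coordinate to $1$ to conclude $(\mcA\uu)_i=(\mcA\vv)_i$ for each $i$, and finish via \eqref{eq:toricGens}. The paper's argument is line-for-line the same.
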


\begin{proof}
Notice that $\xx^\uu-\xx^\vv$ evaluated at any point $(\tt^{\aa_1},\dots,\tt^{\aa_n}) \in \phi_\mcA((\CC^*)^d)$ is just $\tt^{\mcA\uu}-\tt^{\mcA\vv}$. Then, since $\xx^\uu-\xx^\vv$ vanishes on $\phi_\mcA(\RR^d_{>0})$, we have that $\tt^{\mcA\uu}=\tt^{\mcA\vv}$ for all $\tt \in \RR^d_{>0}$. Thus, if we fix $i \in \{1,\dots,d\}$ and specialize to $t_j=1$ for all $j \neq i$, we get $t_i^{(\mcA\uu)_i}=t_i^{(\mcA\vv)_i}$ for all $t_i \in \RR_{>0}$, which means we must have $(\mcA\uu)_i=(\mcA\vv)_i$. Since this holds for all $i$, it follows that $\mcA\uu=\mcA\vv$, hence $\xx^\uu-\xx^\vv \in I_\mcA$ by \eqref{eq:toricGens}.
\end{proof}


\begin{definition}
Let $P$ be a $d$-polytope in $\RR^d$.
\begin{enumerate}
\item Define the {\em non-incidence graph} of $P$, denoted as $G_P$, to be the undirected bipartite graph on the vertices and
facets of $P$ with an edge connecting vertex $i$ to facet $j$ if and only if $i$ {\bf does not} lie on $j$.
\item Let $T_P$ be the toric ideal of $\mathcal A_P$, the vertex-edge incidence matrix of $G_P$. The matrix $\mathcal{A}_P$ has rows (columns)  indexed by the vertices (edges) of $G_P$, with $(i,j)$-entry equal to $1$ if vertex $i$ is incident to edge $j$ and $0$ otherwise. We call $T_P$ the
\textit{toric ideal of the non-incidence graph of $P$}.
\end{enumerate}
\end{definition}

Note that $G_P$ records the support of a slack matrix of $P$, and so we can think of its edges as being labelled by the corresponding entry of $S_P(\xx)$. Toric ideals of bipartite graphs have been studied in the literature.

\begin{lemma} [\textbf{\cite[Lemma 1.1]{OH99}, \cite[Theorem 10.1.5]{Vil15}}] \label{lem:toricGraphGens}
The ideal $T_P$ is generated by all binomials of the form $\xx^{C^+}-\xx^{C^-}$, where $C$ is an (even) chordless cycle in $G_P$, and
{$C^+, C^-\in\ZZ^{|E|}$ are the incidence vectors of the two sets of edges that partition $C$ into alternate edges (that is, if we orient edges from vertices to facets in $G_P$, then $C^+$ consists of the forward edges in a traversal of $C$, and $C^-$ the backward edges).}
Thus, for every even closed walk $W$ in $G_P$, and indeed any union of such,  $\xx^{W^+}-\xx^{W^-} \in T_P$.
\end{lemma}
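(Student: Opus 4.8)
The statement is a known result about toric ideals of bipartite graphs (attributed to Ohsugi--Hibi and Villarreal), so my plan is essentially to recall the standard argument and adapt the notation to $G_P$. The toric ideal $T_P$ is the kernel of the monomial map sending each edge variable $x_e$ (with $e$ joining vertex $i$ to facet $j$) to $v_i f_j$, where $v_i, f_j$ are the toric variables indexed by the vertices and facets of $P$. By \eqref{eq:toricGens}, $T_P$ is generated by the pure-difference binomials $\xx^{\uu^+} - \xx^{\uu^-}$ with $\uu \in \ker_{\ZZ}(\mcA_P)$. The first step is to interpret such integer kernel elements combinatorially: since $\mcA_P$ is the vertex-edge incidence matrix of a graph, a $0/1$ vector in its kernel with disjoint support for $\uu^+$ and $\uu^-$ corresponds to a set of edges in which, at each node of $G_P$, the number of ``$+$'' edges equals the number of ``$-$'' edges. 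Such a set decomposes into even closed walks, and conversely each even closed walk $W$ gives an element $W^+ - W^-$ of $\ker_{\ZZ}(\mcA_P)$; this proves the final sentence of the lemma, and also the claim about unions of cycles.

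The second and main step is to show that one does not need all even closed walks, but only the chordless (induced) even cycles. Here the standard trick is a reduction argument: given an even closed walk $W$, if it is not already a chordless cycle, one finds a chord or a repeated vertex, which splits $W$ into two strictly shorter even closed walks $W_1, W_2$; a short binomial manipulation (factoring out the common monomial corresponding to the shared edges, using that $\xx^{W^+}-\xx^{W^-}$ lies in the ideal generated by $\xx^{W_1^+}-\xx^{W_1^-}$ and $\xx^{W_2^+}-\xx^{W_2^-}$ modulo such common factors) lets us express the binomial of $W$ in terms of the binomials of $W_1$ and $W_2$. Iterating and using that $G_P$ is bipartite (so all cycles are even), every generator coming from \eqref{eq:toricGens} is reduced to a combination of chordless-cycle binomials. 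The converse containment — that each chordless-cycle binomial lies in $T_P$ — is immediate from the first step, since a cycle is in particular an even closed walk.

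The step I expect to be the only real obstacle is making the binomial reduction in step two fully rigorous: one must be careful that when $W$ is split at a chord or repeated vertex, the ``forward/backward'' bipartition of $W$ restricts correctly to the bipartitions of $W_1$ and $W_2$ (this is where bipartiteness and the alternating structure are used), and that the monomial bookkeeping when cancelling shared edges genuinely stays inside the ideal. Since this is classical, I would simply cite \cite[Lemma 1.1]{OH99} and \cite[Theorem 10.1.5]{Vil15} for the cycle generation and record the even-closed-walk consequence explicitly, as the lemma statement already does.
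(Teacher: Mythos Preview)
Your plan is correct and matches the classical argument, but note that the paper itself does not prove this lemma at all: it is stated with citations to \cite[Lemma 1.1]{OH99} and \cite[Theorem 10.1.5]{Vil15} and no proof is given. Your own final remark---that you would simply cite those references and record the even-closed-walk consequence---is exactly what the paper does, so there is nothing further to compare.
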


\begin{example} \label{EX:toricIdeal}
Consider the $4$-polytope $P = \textup{conv}(0, 2e_1, 2e_2, 2e_3, e_1+e_2-e_3, e_4, e_3+e_4)$ \cite[Table 1. \#3]{GPRT17} where $e_i$ is the standard unit vector in $\RR^4$. This polytope is projectively unique with $f$-vector (7,17,17,7). It has symbolic slack matrix
\[
S_P(\xx) = \begin{blockarray}{cccccccc}
 & F_1 & F_2 & F_3 & F_4 & F_5 & F_6 & F_7 \\
 \begin{block}{c[ccccccc]}
\mb{p}_1 & 0 & x_{1,2} & 0 & 0 & 0 & x_{1,6} & 0\\
\mb{p}_2 & x_{2,1} & 0 & 0 & 0 & 0 & x_{2,6} & 0\\
\mb{p}_3 & x_{3,1} & 0 & x_{3,3} & 0 & 0 & 0 & x_{3,7}\\
\mb{p}_4 & 0 & x_{4,2} & x_{4,3} & 0 & 0 & 0 & x_{4,7}\\
\mb{p}_5 & 0 & 0 & 0 & 0 & x_{5,5} & 0 & x_{5,7}\\
\mb{p}_6 & 0 & 0 & 0 & x_{6,4} & x_{6,5} & x_{6,6} & 0\\
\mb{p}_7 & 0 & 0 & x_{7,3} & x_{7,4} & 0 & 0 & 0 \\
\end{block}
\end{blockarray}.
\]
Its non-incidence graph $G_P$ is given in Figure~\ref{fig:toricideal}. Notice that each edge of $G_P$ can be naturally labelled with the corresponding 
$x_{i,j}$ from $S_P(\xx)$. Under this labelling, the chordless cycle marked with dashed lines in Figure~\ref{fig:toricideal} corresponds to the binomial 
$x_{1,6}x_{2,1}x_{3,3}x_{4,2}-x_{1,2}x_{2,6}x_{3,1}x_{4,3}\in T_P$. One can check that the remaining generators of $T_P$, corresponding to chordless cycles of $G_P$, are
$$\begin{array}{ll}
x_{3,7}x_{4,3}-x_{3,3}x_{4,7}, &x_{4,7}x_{5,5}x_{6,4}x_{7,3}-x_{4,3}x_{5,7}x_{6,5}x_{7,4}, \\
     x_{3,7}x_{5,5}x_{6,4}x_{7,3}-x_{3,3}x_{5,7}x_{6,5}x_{7,4}, & x_{1,6}x_{4,2}x_{6,4}x_{7,3}-x_{1,2}x_{4,3}x_{6,6}x_{7,4},\\
     x_{2,6}x_{3,1}x_{6,4}x_{7,3}-x_{2,1}x_{3,3}x_{6,6}x_{7,4}, & x_{1,6}x_{4,2}x_{5,7}x_{6,5}-x_{1,2}x_{4,7}x_{5,5}x_{6,6},\\
     x_{2,6}x_{3,1}x_{5,7}x_{6,5}-x_{2,1}x_{3,7}x_{5,5}x_{6,6}, & x_{1,6}x_{2,1}x_{3,7}x_{4,2}-x_{1,2}x_{2,6}x_{3,1}x_{4,7}.
\end{array}$$

\begin{figure}
\begin{tikzpicture}[scale=0.4,line cap=round,line join=round,line width=.7pt]
{\tikzstyle{every node}=[circle,draw=black,fill=black,inner sep=0pt,minimum width=2.5pt]
\node[label=below:$\mb{p}_7$] (g) at (5.5,0) {};
\node[label=below:$\mb{p}_6$] (f) at (3.5,0) {};
\node[label=below:$\mb{p}_5$] (e) at (1.5,0) {};
\node[label=below:$\mb{p}_4$] (d) at (-0.5,0) {};
\node[label=below:$\mb{p}_3$] (c) at (-2.5,0) {};
\node[label=below:$\mb{p}_2$] (b) at (-4.5,0) {};
\node[label=below:$\mb{p}_1$] (a) at (-6.5,0) {};
\node[label=above:$F_7$] (G) at (5.5,6.5) {};
\node[label=above:$F_6$] (F) at (3.5,6.5) {};
\node[label=above:$F_5$] (E) at (1.5,6.5) {};
\node[label=above:$F_4$] (D) at (-0.5,6.5) {};
\node[label=above:$F_3$] (C) at (-2.5,6.5) {};
\node[label=above:$F_2$] (B) at (-4.5,6.5) {};
\node[label=above:$F_1$] (A) at (-6.5,6.5) {};
\draw[dashed] (a) -- (B);
\draw[dashed] (a) -- (F);
\draw[dashed] (b) -- (A);
\draw[dashed] (b) -- (F);
\draw[dashed] (c) -- (A);
\draw[dashed] (c) -- (C);
\draw (c) -- (G);
\draw[dashed] (d) -- (B);
\draw[dashed] (d) -- (C);
\draw (d) -- (G);
\draw (e) -- (E);
\draw (e) -- (G);
\draw (f) -- (D);
\draw (f) -- (E);
\draw (f) -- (F);
\draw (g) -- (C);
\draw (g) -- (D);
}
\end{tikzpicture}
\caption{Non-incidence graph $G_P$}
\label{fig:toricideal}
\end{figure}
\end{example}

The toric ideal $T_P$ can coincide with $I_P$ as we will see in the next section.
For the remainder of this section we focus on the connections between $I_P$ and $T_P$.

\smallskip
An ideal is said to be a \textit{pure difference binomial ideal} if it is generated by binomials of the form $\xx^\aa - \xx^\bb$.
It follows from \eqref{eq:toricGens} that toric ideals are pure difference binomial ideals.  We now prove that if $I_P$ is toric, or more generally, a pure difference binomial ideal, then $I_P$ is always contained in $T_P$.

\begin{lemma}
If a binomial $\xx^\aa - \xx^\bb$ belongs to $I_P$, then it also belongs to $T_P$.
\end{lemma}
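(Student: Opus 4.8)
The plan is to show that any pure difference binomial in $I_P$ vanishes on a suitable set of the form $\phi_{\mcA_P}(\RR^{|E|}_{>0})$ and then invoke Lemma~\ref{lem:toricBinomials}. The key observation is that every slack matrix $S_P D_f$ of $P$ (in fact every matrix $D_v S_P D_f$, up to relabelling) gives a positive point of $\VV(I_P)$, and conversely the entries of such a matrix, read off along the edges of $G_P$, are exactly the coordinates of a point $\phi_{\mcA_P}(\tt)$ for an appropriate $\tt \in \RR^{|E|}_{>0}$. More precisely, if we index the variables $x_e$ by edges $e = (i,j)$ of $G_P$, then the monomial parametrization $x_e \mapsto s_i t_j$ associated to the vertex-edge incidence matrix $\mcA_P$ produces, for each choice of positive $s_i, t_j$, a rank-one "background" times the support pattern of $S_P$; multiplying a fixed slack matrix $S_P$ entrywise by such a rank-one positive matrix is precisely the operation $D_v S_P D_f$, which by \cite[Corollary 1.5]{GPRT17} and Theorem~\ref{THM:slackconditions} is again a slack matrix of $P$, hence a point of $\VV_+(I_P)$.

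Carrying this out, first I would fix a slack matrix $S_P$ of $P$ with entries $\sigma_e > 0$ for $e \in E(G_P)$. Then for arbitrary $\tt \in \RR^{|E|}_{>0}$ of the special form $t_e = s_i t_j$ coming from positive diagonal scalings, the point $(\sigma_e t_e)_{e \in E}$ lies in $\VV_+(I_P)$. A pure difference binomial $\xx^\aa - \xx^\bb \in I_P$ therefore satisfies $\prod_e (\sigma_e t_e)^{a_e} = \prod_e (\sigma_e t_e)^{b_e}$ for all such $\tt$. Specializing the scalars $s_i, t_j$ to $1$ shows $\prod_e \sigma_e^{a_e} = \prod_e \sigma_e^{b_e}$, so after cancelling the (nonzero) constants we get $\prod_e t_e^{a_e} = \prod_e t_e^{b_e}$ for all $\tt$ arising from positive diagonal scalings; that is, $\xx^\aa - \xx^\bb$ vanishes on $\phi_{\mcA_P}(\RR^{v}_{>0} \times \RR^f_{>0})$, which is exactly $\phi_{\mcA_P}$ applied to the relevant positive torus. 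Lemma~\ref{lem:toricBinomials} then gives $\xx^\aa - \xx^\bb \in T_P$. (One must be slightly careful that Lemma~\ref{lem:toricBinomials} is stated for exponent vectors in $\NN^n$ and for the image of the full positive orthant; here the parametrizing torus for the bipartite incidence matrix is exactly $\RR^{v+f}_{>0}$ mapping onto the edge coordinates, so the hypotheses match directly.)

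The step I expect to be the main obstacle is the clean identification of "entrywise positive diagonal rescalings of a fixed slack matrix" with "points of the form $\phi_{\mcA_P}(\tt)$ scaled by the constant vector $(\sigma_e)$", together with verifying that these are genuinely points of $\VV_+(I_P)$ rather than merely matrices of the correct support and rank. This is where Theorem~\ref{THM:slackconditions} and \cite[Corollary 1.5]{GPRT17} do the real work: $D_v S_P D_f$ has the same support as $S_P$, the same rank $d+1$, and still has $\mathbbm 1$ in its column span (since $D_v S_P D_f$ is a slack matrix of the projectively equivalent polytope obtained by the corresponding projective transformation), so it satisfies the $(d+2)$-minor equations and lies in $\VV_+(I_P)$. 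Once that correspondence is set up, the remainder is the same elementary specialization argument as in the proof of Lemma~\ref{lem:toricBinomials}.
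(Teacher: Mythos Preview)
Your proof is correct and shares with the paper the key observation that every positive row and column scaling $D_v S_P D_f$ of a fixed slack matrix lies in $\VV_+(I_P)$. The difference is in how you pass from this to membership in $T_P$. The paper argues directly that vanishing on all such scalings forces, for each row and each column of $S_P(\xx)$, the sum of the components of $\aa$ indexed by that row (or column) to equal the corresponding sum for $\bb$; it then reads this combinatorially as saying that the exponent vector $\aa-\bb$ is supported on a union of closed walks in $G_P$ and invokes Lemma~\ref{lem:toricGraphGens}. You instead cancel the nonzero constant $\prod_e \sigma_e^{a_e-b_e}$ to reduce to vanishing on $\phi_{\mcA_P}(\RR^{v+f}_{>0})$ and apply Lemma~\ref{lem:toricBinomials}. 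Both routes are short; yours is more algebraic, the paper's more combinatorial, and each essentially unpacks the other's lemma.

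One minor remark: your last paragraph overcomplicates the check that $D_v S_P D_f \in \VV_+(I_P)$. You do not need Theorem~\ref{THM:slackconditions} or the column-span condition on $\mathbbm{1}$; those concern being a \emph{slack matrix}, not lying in the slack variety. Since multiplying by invertible diagonal matrices preserves rank, all $(d+2)$-minors of $D_v S_P D_f$ vanish and its entries remain positive, which is all that is required.
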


\begin{proof}
Let $p=\xx^\aa - \xx^\bb$. Each component $a_i$ of $\aa$ and $b_j$ of $\bb$ appears as the exponent of a variable in the symbolic slack matrix $S_P(\xx)$ and is hence indexed by an edge of $G_P$. Recall that all matrices obtained by scaling rows and columns of $S_P$ by positive scalars also lie in the real variety of $I_P$, and hence must vanish on $p$. This implies that the sum of the components of $\aa$ appearing as exponents of variables in a row (column) of $S_P(\xx)$ equals the sum of the components of $\bb$ appearing as exponents of variables in the same row (column).

Now think of the edges of $G_P$ in the support of $\aa$ as oriented from vertices of $P$ to facets of $P$ and edges in the support of $\bb$ as oriented in the opposite way. Then the previous statement is equivalent to saying that $p$ is supported on an oriented subgraph of $G_P$ (possibly with repeated edges) with the property that the in-degree and out-degree of every node in the subgraph are equal. Therefore, this subgraph is the vertex-disjoint union of closed walks in $G_P$, which by Lemma~\ref{lem:toricGraphGens} implies that~$p$ is in $T_P$.
\end{proof}

\begin{corollary} \label{cor:pure difference binomial ideals are in the toric ideal}
If $I_P$ is a pure difference binomial ideal, then $I_P \subseteq T_P$.
\end{corollary}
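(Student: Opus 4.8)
The plan is to deduce this immediately from the preceding lemma, which already does all the work. By hypothesis, $I_P$ being a pure difference binomial ideal means it admits a generating set consisting entirely of binomials of the form $\xx^\aa - \xx^\bb$. First I would invoke the preceding lemma to conclude that each such generator of $I_P$ lies in $T_P$. Then, since $T_P$ is an ideal, the ideal generated by any subset of $T_P$ is again contained in $T_P$; applying this to the chosen generating set of $I_P$ yields $I_P \subseteq T_P$.

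There is essentially no obstacle here: the entire content sits in the preceding lemma, and the only thing to observe is the routine fact that containment of a generating set in an ideal propagates to containment of the generated ideal. The one subtlety worth flagging is the meaning of "pure difference binomial ideal": it asserts the existence of such a generating set, not that every binomial in the ideal has that form, so the argument is applied to the chosen generators and then closed up under the ideal operations. The toric case is a special instance, since by \eqref{eq:toricGens} toric ideals are generated by pure difference binomials; in particular, if $I_P$ is toric then $I_P \subseteq T_P$.
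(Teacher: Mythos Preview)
Your proposal is correct and matches the paper's approach: the corollary is stated without proof immediately after the lemma, precisely because it follows by applying the lemma to a generating set of pure difference binomials and using that $T_P$ is an ideal.
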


This containment can be strict as we see in the following example.

\begin{example} \label{ex:nongraphic}
Consider the $5$-polytope $P$ with vertices $\mb{p}_1,\ldots, \mb{p}_8$ given by
\begin{align*}
e_1,e_2,e_3,e_4,-e_1-2e_2-e_3,-2e_1-e_2-e_4,-2e_1-2e_2+e_5,-2e_1-2e_2-e_5
\end{align*}
where $e_1,\ldots, e_5$ are the standard basis vectors in $\RR^5$. It can be obtained by {splitting} the distinguished vertex $v$ of the vertex sum of two squares, $(\Box,v)\oplus(\Box,v)$ in the notation of \cite{McMull}. This polytope has 8 vertices and 12 facets and its symbolic slack matrix has the zero-pattern below
$$\begin{bmatrix}
0&*&0&0&0&0&*&0&0&0&0&0 \\
0&0&0&*&*&0&0&0&0&0&0&0 \\
0&0&0&0&0&*&*&*&0&0&*&* \\
*&0&0&*&0&*&0&0&*&0&*&0    \\
*&*&*&0&0&0&0&0&*&*&0&0    \\
0&0&*&0&*&0&0&*&0&*&0&*  \\
*&0&*&0&0&*&0&*&0&0&0&0 \\
0&0&0&0&0&0&0&0&*&*&*&*
\end{bmatrix}.$$

One can check using \texttt{Macaulay2} \cite{M2} that $I_P$ is toric and $I_P \subsetneq T_P$. In fact,
$\dim \CC[\xx]/I_P = 20$, while $\dim \CC[\xx]/T_P = 19$.
\end{example}

At first glance it might seem that if $I_P$ is contained in $T_P$ then
$I_P$ is a pure difference binomial ideal, but this is not true in general.

\begin{example} For the $3$-cube, $I_P \subsetneq T_P$. The toric ideal $T_P$ is minimally generated by
$80$ binomials, each corresponding to a chordless cycle in $G_P$,  while $I_P$ is minimally generated by
$222$ polynomials many of which are not binomials.
\end{example}

In fact, one can attach a geometric meaning to polytopes for which $I_P \subseteq T_P$.
A polytope $P$ is said to be \textit{$2$-level} if it has a slack matrix in which every positive entry is one, i.e.,
$S_P(\mathbbm{1})$ is a slack matrix of $P$.  This class of polytopes have received a great deal of attention
in the literature \cite{StanleyCompressed}, \cite{BFFFMP17}, \cite{FFM16}, \cite{GS17} and are also known as
{\em compressed polytopes}.

\begin{definition} \label{def:morally 2-level}
We call a polytope $P$ \textit{morally $2$-level} if $S_P(\mathbbm{1})$ lies in the slack variety of $P$.
\end{definition}


Note that if $P$ is morally $2$-level, it might not be that $S_P(\mathbbm{1})$ is a slack matrix of~$P$, but merely that
$\mathbbm{1} \in \VV_+(I_P)$. Hence, morally $2$-level polytopes contain $2$-level polytopes.
These polytopes correspond to pointed polyhedral cones having a choice of generators such that there is a 0/1 slack matrix of that cone.
For example, all regular $d$-cubes are $2$-level and hence any polytope that is combinatorially a $d$-cube is morally $2$-level but not necessarily $2$-level. Being morally $2$-level
does not require that there is a polytope in the combinatorial class of $P$ that is a $2$-level polytope. For example, a bisimplex in $\RR^3$ is morally $2$-level, but no polytope in its combinatorial class is $2$-level.
This is since $S_P(\mathbbm{1})$ can lie in the slack variety of $P$ even though it may not have the all-ones vector in its column space. A very attractive feature of the set of morally $2$-level polytopes is that it is closed under polarity
unlike the set of $2$-level polytopes, but preserves many of the properties of $2$-level polytopes such as psd-minimality \cite{GRT}, \cite{GPRT17}.

\begin{theorem} \label{thm:morally 2-level}
A polytope $P$ is morally $2$-level if and only if $I_P \subseteq T_P$.
\end{theorem}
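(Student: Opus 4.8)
The plan is to prove both implications by identifying the variety of $T_P$ with the closure of the set of positive diagonal rescalings of the all-ones matrix $S_P(\mathbbm{1})$. Recall that $T_P=I_{\mcA_P}=\ker\pi$, where $\mcA_P$ is the vertex-edge incidence matrix of $G_P$: if we index the facet nodes of $G_P$ by $v+1,\dots,v+f$ and let $x_e$ be the variable on the edge joining vertex $i$ to facet $j$, then $\pi(x_e)=t_i\,t_{v+j}$, so the parametrizing map sends $\tt\in(\CC^*)^{v+f}$ to the point of $\CC^t$ whose $x_e$-coordinate is $t_i\,t_{v+j}$. Equivalently, $\phi_{\mcA_P}(\tt)$ is just the tuple of entries of $D_v\,S_P(\mathbbm{1})\,D_f$ with $D_v=\diag(t_1,\dots,t_v)$ and $D_f=\diag(t_{v+1},\dots,t_{v+f})$; in particular $\mathbbm{1}=\phi_{\mcA_P}(\mathbbm{1})\in\VV(T_P)$. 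The direction $(\Leftarrow)$ is then immediate: if $I_P\subseteq T_P$ then $\VV(T_P)\subseteq\VV(I_P)$, so $\mathbbm{1}\in\VV(I_P)$, i.e. $S_P(\mathbbm{1})$ lies in the slack variety and $P$ is morally $2$-level by Definition~\ref{def:morally 2-level}.

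For $(\Rightarrow)$, assume $P$ is morally $2$-level, so $\mathbbm{1}\in\VV(I_P)$, and hence $\mathbbm{1}\in\VV_+(I_P)$ since $\mathbbm{1}$ is strictly positive. The first step is to note that $\VV_+(I_P)$ is stable under scaling rows and columns by positive scalars: because $I_P$ contains the $(d+2)$-minors of $S_P(\xx)$ and differs from that ideal only by saturation at $\prod_i x_i$, one has $\VV(I_P)\cap\RR^t_{>0}=\{\s\in\RR^t_{>0}:\rank S_P(\s)\le d+1\}$, and both the rank and the strict positivity of the support are preserved under left and right multiplication by positive diagonal matrices. Consequently $D_v\,S_P(\mathbbm{1})\,D_f\in\VV_+(I_P)\subseteq\VV(I_P)$ for all positive diagonal $D_v,D_f$, which by the identification above means $\phi_{\mcA_P}(\RR^{v+f}_{>0})\subseteq\VV(I_P)$. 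Now take any $g\in I_P$: then $\pi(g)(\tt)=g(\phi_{\mcA_P}(\tt))=0$ for every $\tt\in\RR^{v+f}_{>0}$, and since $\RR^{v+f}_{>0}$ is Zariski dense in $\CC^{v+f}$ (as in the proof of Lemma~\ref{lem:toricBinomials}) this forces $\pi(g)=0$, i.e. $g\in\ker\pi=T_P$. Hence $I_P\subseteq T_P$.

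The main obstacle is the first step of $(\Rightarrow)$: one must justify carefully that $\VV_+(I_P)$ really is closed under independent positive rescalings of rows and columns --- this is exactly where the structure of $I_P$ as a \emph{saturated} determinantal ideal, together with the positivity of $\mathbbm{1}$, is used --- and then recognize the rescaled all-ones matrices as precisely the image of the monomial map $\phi_{\mcA_P}$ defining $T_P$. Once this is in place, the remainder is the same density/vanishing argument already used in the proof of Lemma~\ref{lem:toricBinomials}, now applied to possibly non-binomial elements of $I_P$.
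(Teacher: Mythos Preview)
Your proof is correct, and the $(\Leftarrow)$ direction is essentially the same as the paper's. The $(\Rightarrow)$ direction, however, follows a genuinely different route.

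The paper argues combinatorially at the level of generators: since $S_P(\mathbbm{1})$ kills every $(d+2)$-minor, each minor has equally many $+1$ and $-1$ terms and can be paired off into pure difference binomials; each such binomial is supported on a union of closed walks in $G_P$ (by interpreting the two monomials as oppositely oriented perfect matchings), hence lies in $T_P$ by Lemma~\ref{lem:toricGraphGens}. This shows $J_P\subseteq T_P$, and then saturation of $T_P$ gives $I_P\subseteq T_P$.

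You instead argue geometrically via the torus action: once $\mathbbm{1}\in\VV_+(I_P)$, the invariance of $\VV_+(I_P)$ under positive row and column scalings (which you correctly deduce from the identity $\VV(I_P)\cap(\CC^*)^t=\VV(J_P)\cap(\CC^*)^t$ between a saturated ideal and its generating ideal on the torus) yields $\phi_{\mcA_P}(\RR^{v+f}_{>0})\subseteq\VV(I_P)$. Then every $g\in I_P$ has $\pi(g)$ vanishing on the open orthant, hence identically, so $g\in\ker\pi=T_P$. This sidesteps the combinatorial decomposition of minors entirely and works directly with the defining map of $T_P$; it is closer in spirit to Lemma~\ref{lem:toricBinomials}, but applied to arbitrary polynomials rather than binomials. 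The paper's approach makes the cycle structure of the minors explicit, while yours highlights that the containment $I_P\subseteq T_P$ is really a statement about the torus orbit of $\mathbbm{1}$ sitting inside the slack variety.
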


\begin{proof}
Notice that the ideal $J_P = \langle (d+2)\text{-minors of }S_P(\xx) \rangle$ is contained in the slack ideal $I_P$.
Suppose that $S_P(\mathbbm{1})\in \mathcal{V}(I_P)$. Then any $(d+2)$-minor $p$ of $S_P(\xx)$ must have the same number of monomials with coefficient $+1$ as those with coefficient $-1$ since $p$ must vanish on $S_P(\mathbbm{1})$, which sets each monomial to one. This implies that we can write $p$ as a sum of pure difference binomials. Since $p$ is a minor, each of these pure difference binomials corresponds to a pair of permutations that induce two perfect matchings on the same set of vertices.
The union of these two matchings is a subgraph of $G_P$, which we can view as a directed graph by orienting the two matchings in opposite directions. Then each vertex will have equal in-degree and out-degree, which shows that these edges form a union of closed walks in $G_P$, and
thus the corresponding binomial is in $T_P$ by Lemma~\ref{lem:toricGraphGens}.
Therefore $p \in T_P$, so that $J_P\subseteq T_P$. Since toric ideals are saturated with respect to all variables, the result follows.

Conversely, suppose $I_P\subseteq T_P$. Since $T_P$ is generated by pure difference binomials, which vanish when evaluated at $S_P(\mathbbm{1})$, we have $S_P(\mathbbm{1}) \in \mathcal{V}(T_P)$. But $I_P\subseteq T_P$ implies that $\mathcal{V}(I_P)\supseteq \mathcal{V}(T_P) \ni S_P(\mathbbm{1})$, which is the desired result.
\end{proof}

We have talked about pure difference binomial slack ideals as a superset of toric slack ideals.
A slack ideal is binomial if it is generated by binomials of the form $\xx^\aa - \gamma \xx^\bb$, where $\gamma$ is a non-zero scalar.
Therefore, one might extend the study of toric slack ideals to the following hierarchy of binomial slack ideals:
\[
\textup{toric}  \subseteq \textup{pure difference binomial}  \subseteq \textup{binomial}.
\]
So far, we have not encountered a pure difference binomial slack ideal that is not toric, nor a binomial slack ideal which is not pure difference, but it might be possible that all containments are strict. It follows from Corollaries 2.2 and 2.5 in \cite{ES96} that, if the slack ideal $I_P$ is binomial, then it is a radical lattice ideal. This implies that the slack variety is a union of scaled toric varieties.


%
%

\section{Projective uniqueness and toric slack ideals}
\label{sec:PU}

Recall that a polytope $P$ is said to be \textit{projectively unique} if any polytope $Q$ that is combinatorially equivalent to $P$ is also projectively equivalent to $P$, i.e., there is a projective transformation that sends $Q$ to $P$. This corresponds to saying that
the slack realization space of $P$ is a single positive point.

Every $d$-polytope with $d+2$ vertices or facets is projectively unique \cite[Exercise 4.8.30 (i)]{Grunbaum}.
In particular, all products of simplices are projectively unique. We first prove that the slack ideal of a $d$-polytope with $d+2$ vertices or facets coincides with $T_P$, and is thus toric.

\begin{proposition} \label{Prop:d+2vertices}
Let $P$ be a polytope in $\RR^d$ with $d+2$ vertices or facets. Then its slack ideal $I_P$ equals the toric ideal $T_P$.
\end{proposition}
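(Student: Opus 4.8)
The plan is to reduce, by polarity and by peeling off pyramids, to the case of a product of two simplices, and then to analyze that case through the combinatorics of minors of a graph incidence matrix.

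First I would pass to a polytope with $d+2$ facets. Taking polars transposes the symbolic slack matrix and leaves the non-incidence graph unchanged (it only interchanges the two sides of the bipartition), so $I_{P^\Delta}=I_P$ and $T_{P^\Delta}=T_P$; since $P$ has $d+2$ vertices if and only if $P^\Delta$ has $d+2$ facets, we may assume the latter. By the classification of such polytopes via Gale diagrams \cite{Grunbaum}, a $d$-polytope with $d+2$ facets is a $k$-fold pyramid over a product of two simplices $\Delta_a\times\Delta_b$ with $a,b\ge 1$ and $a+b+k=d$. If $Q'=\operatorname{pyr}(Q)$, then placing the apex opposite the base makes $S_{Q'}(\xx)$ block diagonal with blocks $x_0$ (the apex-base entry, a new variable) and $S_Q(\xx)$; expanding the maximal minors of $S_{Q'}(\xx)$ along this decomposition gives $J_{Q'}:x_0^{\infty}=J_Q\,\CC[x_0,\xx]$, hence $I_{Q'}=I_Q\,\CC[x_0,\xx]$, while $G_{Q'}$ is $G_Q$ together with the isolated edge joining apex and base, so $T_{Q'}=T_Q\,\CC[x_0,\xx]$. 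Thus $I_Q=T_Q$ forces $I_{Q'}=T_{Q'}$, and it is enough to treat $P=\Delta_a\times\Delta_b$.

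For this base case, observe that for the standard product of simplices the slack matrix of $P=\Delta_a\times\Delta_b$ is exactly the vertex-edge incidence matrix of $K_{a+1,b+1}$ (the $(a+1)(b+1)$ vertices of $P$ indexing the edges, the $d+2=(a+1)+(b+1)$ facets indexing the nodes), so $G_P$ is the subdivision of $K_{a+1,b+1}$. The inclusion $I_P\subseteq T_P$ is immediate: products of simplices are $2$-level and hence morally $2$-level, so Theorem~\ref{thm:morally 2-level} applies (alternatively, by the minor expansion below each maximal minor of $S_P(\xx)$ is a monomial times a product of even-cycle binomials of $G_P$, whence $J_P\subseteq T_P$ and the conclusion follows as in Corollary~\ref{cor:pure difference binomial ideals are in the toric ideal}). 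For the reverse inclusion, Lemma~\ref{lem:toricGraphGens} shows $T_P$ is generated by the binomials of the chordless cycles of $G_P$, which here are precisely the $8$-cycles coming from the $4$-cycles of $K_{a+1,b+1}$. Given a $4$-cycle on facets $F^{(1)}_i,F^{(2)}_j,F^{(1)}_{i'},F^{(2)}_{j'}$, I would take the $d+2$ vertices $(i,j),(i',j),(i',j'),(i,j')$ of $P$ together with one pendant vertex for each remaining facet (say $(k,j)$ for each $F^{(1)}_k$ with $k\notin\{i,i'\}$, and $(i,l)$ for each $F^{(2)}_l$ with $l\notin\{j,j'\}$). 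The subgraph of $K_{a+1,b+1}$ spanned by these $d+2$ edges is connected with a unique cycle, namely the chosen $4$-cycle, so the corresponding maximal minor of $S_P(\xx)$ equals $\pm$ a monomial times the cycle binomial, which is the chosen generator of $T_P$. Hence every generator of $T_P$ lies in $J_P:(\prod_i x_i)^{\infty}=I_P$, so $T_P\subseteq I_P$ and therefore $I_P=T_P$.

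The routine ingredients are the Gale classification, the block computation in the pyramid step, and the standard fact that the determinant of a weighted graph incidence matrix vanishes unless the graph is a disjoint union of unicyclic graphs, in which case it equals $\pm$ a monomial times a product of cycle binomials --- pure differences here because $K_{a+1,b+1}$ is bipartite. The step I expect to be the main obstacle is the last part of the base case: checking that the maximal minor attached to the pendant-tree construction produces exactly the wanted $4$-cycle binomial, with the correct partition into the two alternating edge-sets of Lemma~\ref{lem:toricGraphGens}, and that letting the $4$-cycle range over all $4$-cycles exhausts the generators of $T_P$. This is where the bipartite splitting of the facet set of $\Delta_a\times\Delta_b$ into $\Delta_a$-facets and $\Delta_b$-facets is used essentially.
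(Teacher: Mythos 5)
Your overall strategy coincides with the paper's up to polarity: reduce via pyramids to a product (for the paper, a free sum) of two simplices whose symbolic slack matrix is supported on the vertex--edge incidence matrix of a complete bipartite graph, get $I_P\subseteq T_P$ from $2$-levelness, and realize cycle binomials as maximal minors. The polarity and pyramid reductions and the containment $I_P\subseteq T_P$ are fine.

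The gap is in the base case, in the assertion that the chordless cycles of $G_P$ ``are precisely the $8$-cycles coming from the $4$-cycles of $K_{a+1,b+1}$.'' This is false as soon as $a,b\ge 2$. Since $G_P$ is the subdivision of $K_{a+1,b+1}$ and every subdivision node has degree $2$, \emph{every} simple cycle of $K_{a+1,b+1}$ --- of length $4,6,\dots,2\min(a+1,b+1)$ --- subdivides to a chordless cycle of $G_P$, and these are exactly the chordless cycles (any chord would have to meet a subdivision node, whose two edges already lie on the cycle). Lemma~\ref{lem:toricGraphGens} therefore supplies generators of $T_P$ that your construction never produces, and they are not redundant: for $\Delta_2\times\Delta_2$ the binomial of a subdivided $6$-cycle is a minimal generator of $T_P$ that does \emph{not} lie in the ideal generated by the subdivided $4$-cycle binomials, since neither of its monomials is divisible by a monomial of any $4$-cycle binomial, so no rewriting step using those generators applies to it. As written you have only shown that the $4$-cycle generators of $T_P$ lie in $I_P$, which does not yet give $T_P\subseteq I_P$. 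The gap is repairable in two ways: (i) run your pendant-tree construction for a simple cycle of arbitrary length $c$ --- the $c$ chosen vertices of $P$ together with the pendants still span a connected unicyclic subgraph on all $d+2$ facet-nodes, so the corresponding maximal minor is a monomial times the length-$2c$ chordless-cycle binomial; this recovers the bijection between simple cycles of $K_{a+1,b+1}$ and chordless cycles of $G_P$ on which the paper's proof rests; or (ii) note that the $4$-cycles generate the cycle lattice of $K_{a+1,b+1}$ over $\ZZ$, so $T_P$ is the saturation of the ideal generated by the $4$-cycle binomials, and since $I_P$ is saturated and contains those binomials it contains $T_P$. Either patch completes the argument.
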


\begin{proof}
Up to polarity we may consider $P$ to be a polytope with $d+2$ vertices. In this case $P$ is combinatorially equivalent to a repeated pyramid over a free sum of two simplices, $\mathrm{pyr}_r(\Delta_k \oplus \Delta_\ell)$, with $k,\ell \geq 1$, $r \geq 0$ and $r+k+\ell=d$ \cite[Section 6.1]{Grunbaum}. Since taking pyramids preserves the slack ideal, it is enough to study the slack ideals of free sums of simplices (respectively, product of simplices). By \cite[Lemma 5.7]{GPRT17}, if $P=\Delta_k \oplus \Delta_\ell$, then $S_P(\xx)$ has the zero pattern of the vertex-edge incidence matrix of the complete bipartite graph $K_{k+1,\ell+1}$.

From \cite[Proposition 5.9]{GPRT17}, it follows that $I_P$ is generated by the binomials
\[
\det(M_C) = \left|\begin{array}{cccccc}
x_1 & 0 & 0 & \cdots & 0 & x_2 \\
x_3 & x_4 & 0 & \cdots & 0 & 0 \\
0 & x_5 & x_6 & \cdots & 0 & 0 \\
\vdots & \vdots & \vdots & \ddots & \vdots & \vdots \\
0 & 0 & 0 & \cdots & x_{2c-2} & 0 \\
0 & 0 & 0 & \cdots & x_{2c-1} & x_{2c}
\end{array}\right|,
\]
where $M_C$ is a $c \times c$ symbolic matrix whose support is the vertex-edge incidence matrix of the simple cycle $C$ (of size $c$) in $K_{k+1,\ell+1}$.

On the other hand, $T_P$ is generated by the binomials $\xx^{D^+}-\xx^{D^-}$ corresponding to chordless cycles $D$ of the non-incidence graph $G_P$ by Lemma \ref{lem:toricGraphGens}. Thus, it suffices to show that there exists a bijection between simple cycles $C$ in $K_{k+1,l+1}$ and chordless cycles $D$ in $G_P$ such that $\det(M_C)=\xx^{D^+}-\xx^{D^-}$.

Let $v_1,\ldots, v_{k+\ell+2}$ be the vertices of $P$ and $F_1,\ldots, F_{(k+1)(\ell+1)}$ be its facets.
Since $S_P(\xx)$ has the support of the vertex-edge incidence matrix of $K_{k+1,\ell+1}$, we can consider $K_{k+1,\ell+1}$ to be a bipartite graph on the vertices $v_1,\dots,v_{k+\ell+2}$ where each edge $\{v_{i_1}, v_{i_2}\}$ corresponds exactly to the facet $F_j$ of $P$ containing neither $v_{i_1}$ nor $v_{i_2}$. Notice that the non-incidence graph $G_P$ can be obtained by subdividing each edge $\{v_{i_1}, v_{i_2}\}$ of $K_{k+1,\ell+1}$ into two edges $\{v_{i_1}, F_{j}\}$ and $\{F_{j}, v_{i_2}\}$.

Now, let $C$ be a simple cycle of size $c$ in $K_{k+1,\ell+1}$ with vertices $v_{i_1}, v_{i_2}, \dots, v_{i_c}$ and assume that $F_{j_1},F_{j_2},\dots,F_{j_c}$ are the facets corresponding to the edges of $C$. Then in $G_P$ there is a cycle $D$ of size $2c$ on vertices $v_{i_1},F_{j_1}, v_{i_2}, F_{j_2}, \dots, F_{j_{c-1}}, v_{i_c}, F_{j_c}$. In fact, one can see that the subgraph induced by these vertices is exactly a chordless cycle in $G_P$. This is because from the support of $S_P$ we know each facet in $P$ corresponds to a vertex of degree $2$ in $G_P$; furthermore, every edge in $G_P$ must be between a vertex and a facet, but since every facet already has degree $2$ in the cycle~$D$, this subgraph must consist only of this cycle.
Hence from a simple cycle $C$ in $K_{k+1,\ell+1}$, we get a chordless cycle $D$ in $G_P$, as desired. The reverse correspondence is analogous.
\end{proof}

The class of polytopes for which $I_P = T_P$ is larger than those with $d+2$ vertices or facets.

\begin{example} \label{ex:class3}
For the polytope given in Example~\ref{EX:toricIdeal}, which was $4$-dimensional but with $7$ vertices and $7$ facets, one can check that $I_P$ is the toric ideal $T_P$.
\end{example}

In $\RR^2$ the only projectively unique polytopes are triangles and squares. In $\RR^3$ there are four combinatorial classes of projectively unique polytopes --- tetrahedra, square pyramids, triangular prisms and bisimplices. The number of projectively unique $4$-polytopes is currently unknown. There are $11$ known combinatorial classes,
attributed to Shephard by McMullen \cite{McMull}, and listed in full in \cite{AZ15}. Beyond the $4$-polytopes with $4+2 = 6$ vertices or facets, this list has three additional combinatorial classes.  One of them is the polytope seen in Example~\ref{ex:class3}. It was shown in \cite{GPRT17} that all of the $11$ known projectively unique polytopes in $\RR^4$ have toric slack ideals. This discussion suggests that there might be a connection between projective uniqueness of a polytope and its slack ideal being toric. In this section we establish the precise result. The toric ideal $T_P$ of the non-incidence graph $G_P$ will again play an important role.

\begin{definition}
We say that the slack ideal $I_P$ of a polytope $P$ is \textit{graphic} if it is equal to the toric ideal $T_P$.
\end{definition}

\begin{theorem} \label{thm:graphic}
The slack ideal $I_P$ of a polytope $P$ is graphic if and only if $P$ is projectively unique and $I_P$ is toric.
\end{theorem}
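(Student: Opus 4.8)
The plan is to prove the two directions separately, with the easier direction being ``graphic $\Rightarrow$ projectively unique and toric,'' and the harder one being the converse. For the forward direction, assume $I_P = T_P$. Then $I_P$ is toric by definition, so only projective uniqueness remains. Here I would use the interpretation of projective uniqueness recalled at the start of this section: $P$ is projectively unique if and only if $\VV_+(I_P)/(\RR^v_{>0}\times\RR^f_{>0})$ is a single point. Since $T_P$ is a toric ideal, its positive real variety $\VV_+(T_P)$ is exactly $\phi_{\mcA_P}(\RR^{?}_{>0})$ up to closure; more usefully, $\VV_+(T_P)$ consists precisely of the points $(\tt^{\aa_e})_{e \in E(G_P)}$ for $\tt$ positive, where $\aa_e$ is the column of the vertex-edge incidence matrix $\mcA_P$. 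An edge $e$ of $G_P$ joins a vertex-node $i$ and a facet-node $j$, so $\tt^{\aa_e} = t_i t_j$; thus every positive point of $\VV(T_P)$ has the form $S_P(\mathbbm 1)$ scaled by the diagonal action of $(\RR^v_{>0}\times\RR^f_{>0})$. Hence $\VV_+(I_P) = \VV_+(T_P)$ is a single orbit under row and column scaling, i.e.\ the slack realization space is a point, so $P$ is projectively unique.

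For the converse, assume $P$ is projectively unique and $I_P$ is toric. By Corollary~\ref{cor:pure difference binomial ideals are in the toric ideal} (toric ideals are pure difference binomial), we already have $I_P \subseteq T_P$. So it suffices to prove the reverse inclusion $T_P \subseteq I_P$. Since $I_P$ is toric, it equals the vanishing ideal of the Zariski closure of $\VV_+(I_P)$ (the positive part of a toric variety is Zariski dense in it), so it is enough to show that every generator of $T_P$ vanishes on $\VV_+(I_P)$. Now $\VV_+(I_P)$, by projective uniqueness, is the single orbit $(\RR^v_{>0}\times\RR^f_{>0})\cdot \s$ for any fixed positive point $\s \in \VV_+(I_P)$; and since $I_P \subseteq T_P$ and $\s$ is in particular a point where all the pure difference binomials of $T_P$ vanish, every element of this orbit also lies in $\VV(T_P)$ because $\VV(T_P)$ is invariant under the scaling action (the binomials $\xx^{C^+}-\xx^{C^-}$ have equal degree in each vertex-variable and each facet-variable, by Lemma~\ref{lem:toricGraphGens}). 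Therefore $\VV_+(I_P) \subseteq \VV(T_P)$, and since $I_P$ is the vanishing ideal of the closure of $\VV_+(I_P)$ while $T_P$ is an ideal vanishing on that set, we get $T_P \subseteq I_P$. Combined with the reverse containment, $I_P = T_P$.

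The step I expect to be the main obstacle is making rigorous the claim that $I_P$, being toric, is the full vanishing ideal of (the Zariski closure of) $\VV_+(I_P)$, and identifying $\VV_+(I_P)$ precisely with a single scaling orbit. The first part is standard toric geometry — a toric ideal is prime and is the vanishing ideal of its variety, whose positive part is dense — but one must check that the coordinate description of $\VV_+(I_P)$ as an orbit matches the parametrization by $\phi_{\mcA_P}$, i.e.\ that a positive point of the slack variety really does ``factor'' through the vertex and facet scalings. This is essentially the content of \cite[Corollary 1.5]{GPRT17} quoted in Section~\ref{sec:bg} together with Theorem~\ref{THM:slackconditions}, so the argument should go through; care is needed only to confirm that the toric ideal $T_P$ and the scaling orbit have the same dimension, so that density of the orbit in $\VV(T_P)$ forces the vanishing-ideal identification. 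I would also double-check, via Lemma~\ref{lem:toricGraphGens}, that the generators of $T_P$ are homogeneous with respect to the multigrading by vertex- and facet-nodes, which is what guarantees the scaling-invariance of $\VV(T_P)$ used in both directions.
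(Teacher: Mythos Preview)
Your overall architecture matches the paper's: forward direction via ``the positive variety is a single scaling orbit,'' and converse via $I_P\subseteq T_P$ from Corollary~\ref{cor:pure difference binomial ideals are in the toric ideal}, followed by showing that the cycle binomials generating $T_P$ vanish on $\VV_+(I_P)$ and hence lie in $I_P$.

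For the forward direction you take a slightly different route than the paper. The paper fixes a spanning forest $F$ of $G_P$, scales so that all $F$-entries are $1$ (Lemma~\ref{lem:forest}), and then uses that each remaining edge closes a cycle in $F$ whose binomial lies in $I_P=T_P$, forcing that entry to be $1$ as well. Your argument instead identifies $\VV_+(T_P)$ directly with $\phi_{\mcA_P}(\RR^{v+f}_{>0})$, whose points are exactly $(t_it_j)_{(i,j)\in E(G_P)}$, i.e.\ row/column scalings of $S_P(\mathbbm{1})$. Both are correct; yours is cleaner conceptually, the paper's is more hands-on and avoids invoking the full $\VV_+(I_\mcA)=\phi_\mcA(\RR_{>0}^d)$ fact.

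In the converse there is one genuine slip. You write that, for any $\s\in\VV_+(I_P)$, ``since $I_P\subseteq T_P$, $\s$ is in particular a point where all the pure difference binomials of $T_P$ vanish.'' That inference is backwards: $I_P\subseteq T_P$ gives $\VV(T_P)\subseteq\VV(I_P)$, not the reverse, so an arbitrary $\s\in\VV_+(I_P)$ need not lie in $\VV(T_P)$. The paper repairs this by choosing the \emph{specific} point $\s=S_P(\mathbbm{1})$: it lies in $\VV(T_P)$ trivially (pure difference binomials vanish at $\mathbbm{1}$), and $I_P\subseteq T_P$ then gives $S_P(\mathbbm{1})\in\VV(T_P)\subseteq\VV(I_P)$, which is exactly Theorem~\ref{thm:morally 2-level}. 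With $\s=S_P(\mathbbm{1})$ your remaining steps go through: projective uniqueness makes $\VV_+(I_P)$ the scaling orbit of $S_P(\mathbbm{1})$, the cycle binomials $\xx^{C^+}-\xx^{C^-}$ are homogeneous for the row/column multigrading (each monomial picks up one variable per incident row and column), hence vanish on the whole orbit, and Lemma~\ref{lem:toricBinomials} (equivalently your density-of-the-positive-part argument) then places them in $I_P$.
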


\begin{proof}
Suppose that $I_P$ is graphic. Then, $I_P$ is toric, so we only need to show that~$P$ is projectively unique. Pick a maximal spanning forest $F$ of the bipartite graph $G_P$. By Lemma \ref{lem:forest} we may scale the rows and columns of $S_P$ so that it has ones in the entries indexed by $F$. Take an edge of $G_P$ outside of $F$ and consider the binomial corresponding to the unique cycle this edge forms together with $F$. Since $I_P = T_P$, this binomial is in $I_P$, therefore it must vanish on the above scaled slack matrix of $P$.  This implies that the entry in the slack matrix indexed by the chosen edge must also be $1$. Repeating this argument we see that the
entire slack matrix has $1$ in every non-zero entry which implies that there is only one possible slack matrix for $P$ up to scalings, hence only one polytope in the combinatorial class of~$P$ up to projective equivalence.

Conversely, suppose that $P$ is projectively unique and $I_P$ is toric, say $I_P = I_\mathcal{A}$ for some point configuration $\mathcal{A}$. Let $\xx^{\uu}-\xx^{\vv}$ be a generator of $T_P$.  Notice this generator vanishes when each $x_i = 1$, and
by Lemma~\ref{lem:toricGraphGens}, $\xx^{\uu}-\xx^{\vv} = \xx^{C^+}-\xx^{C^-}$ for some chordless cycle $C$ of $G_P$.
Now, since $I_P$ is toric, by Corollary~\ref{cor:pure difference binomial ideals are in the toric ideal} we have that $I_P \subseteq T_P$, and then by Theorem~\ref{thm:morally 2-level}, $S_P(\mathbbm 1) \in \mathcal V(I_P)$. Since $P$ is projectively unique, every element of $\mathcal{V}_+(I_P)$ is obtained by positive row and column scalings of $S_P(\mathbbm 1)$.
Therefore, $\phi_{\mathcal{A}}(\mathbb R^d_{>0})
\subseteq \mathcal{V}_+(I_P)$ consists of row and column scalings of $S_P(\mathbbm 1)$.
Since a binomial of the form $\xx^{C^+}-\xx^{C^-}$, where $C$ is a chordless cycle, contains in each of its monomials exactly one variable from each row and column of $S_P(\xx)$ on which it is supported, it must also vanish on all row and column scalings of $S_P(\mathbbm{1})$.
It follows that the generator $\xx^{\uu}-\xx^{\vv}$ vanishes on $\phi_{\mathcal A}(\mathbb R^d_{>0})$. By Lemma~\ref{lem:toricBinomials}, this means that $\xx^{\uu}-\xx^{\vv} \in I_P$, thus all generators of $T_P$ are contained in $I_P$, which completes the proof.
\end{proof}

Theorem~\ref{thm:graphic} naturally leads to the question whether $P$ can have a toric slack ideal even if it is not projectively unique and whether all projectively unique polytopes have toric slack ideals. In the rest of this section, we discuss these two questions.

All $d$-polytopes with toric slack ideals for $d\leq 4$ were found in \cite{GPRT17}. These polytopes all happen to be projectively unique, and hence have graphic slack ideals.
Therefore the first possible non-graphic toric slack ideal has to come from a polytope of dimension at least five. {Indeed, we saw that the polytope in Example~\ref{ex:nongraphic} has a toric slack ideal but is not graphic. Hence, this polytope is not projectively unique by Theorem~\ref{thm:graphic}, recovering a result implied by a theorem of McMullen \cite[Theorem 5.3]{McMull}.}

In the next section we will see a concrete $8$-polytope that is projectively unique but does not have a toric slack ideal. However, this is not an isolated instance as there are infinitely many such examples in high enough dimension.

\begin{proposition} \label{prop:many PU}
For $d \geq 69$ there exist infinitely many projectively unique $d$-polytopes that do not have a toric (even pure difference binomial) slack ideal.
\end{proposition}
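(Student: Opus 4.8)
The plan is to exhibit an infinite family of projectively unique $d$-polytopes, for $d$ beyond some explicit bound, that fail to have a toric slack ideal, and to detect the failure using Theorem~\ref{thm:graphic} together with Theorem~\ref{thm:morally 2-level}. The natural source of such examples is the construction of Perles and Shephard, refined by McMullen and more recently by Adiprasito and Ziegler \cite{AZ15}, which produces infinitely many combinatorial types of projectively unique polytopes in fixed dimension (the Adiprasito--Ziegler bound being $d = 69$, which is presumably where the constant in the statement comes from). So the first step is to recall that there is a dimension $d_0$ (here $d_0 = 69$) and an infinite list $\{P_n\}_{n \in \NN}$ of pairwise combinatorially inequivalent projectively unique $d_0$-polytopes; one then takes repeated pyramids to get the same conclusion in every dimension $d \geq d_0$, using that pyramids preserve both projective uniqueness and the slack ideal.

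The second step is to rule out toric slack ideals for all but finitely many members of the family. Here I would argue by contradiction: suppose $I_{P_n}$ were toric. Since $P_n$ is projectively unique, Theorem~\ref{thm:graphic} forces $I_{P_n} = T_{P_n}$ to be graphic, hence in particular $I_{P_n} \subseteq T_{P_n}$, and then Theorem~\ref{thm:morally 2-level} says $P_n$ is morally $2$-level, i.e. $S_{P_n}(\mathbbm 1) \in \VV(I_{P_n})$. But for a projectively unique polytope the only elements of $\VV_+(I_{P_n})$ are positive row-and-column scalings of any fixed slack matrix, so $S_{P_n}(\mathbbm 1)$ being in the slack variety would mean that $P_n$ has a realization with a rational (indeed $0/1$) slack matrix, hence a rational realization. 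Therefore it suffices to produce infinitely many projectively unique polytopes in fixed dimension that admit \emph{no} rational realization. This is exactly what the Perles--Shephard--type constructions deliver: the irrational examples come in infinite families because one can vary the combinatorial gadget that forces the irrationality (e.g. varying the Perles configuration, or iterating the "sum" operation), while keeping the polytope projectively unique and in bounded dimension. I would cite the relevant statement in \cite{AZ15} (or \cite{McMull}) for the existence of such an infinite family.

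The third step is the pure-difference-binomial strengthening in parentheses. This is almost automatic given the above: by Corollary~\ref{cor:pure difference binomial ideals are in the toric ideal}, if $I_{P_n}$ were a pure difference binomial ideal then $I_{P_n} \subseteq T_{P_n}$, and Theorem~\ref{thm:morally 2-level} again yields that $P_n$ is morally $2$-level; combined with projective uniqueness this again produces a rational ($0/1$) realization, contradicting irrationality. So the same infinite family works for the stronger claim, with no extra input.

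The main obstacle is not any single deduction — each of the three steps is short given the machinery already in the excerpt — but rather pinning down the \emph{infinitude} and the \emph{dimension bound}: one must invoke a construction from the literature that genuinely yields infinitely many combinatorial types of projectively unique polytopes, all without rational realizations, in a single fixed dimension. I would lean on the Adiprasito--Ziegler results \cite{AZ15}, which furnish exactly such a family with the dimension bound $69$ (and infinitely many types), and on the classical Perles irrationality mechanism for the non-rationality. If instead one wanted a self-contained argument, the harder part would be to describe an explicit parametrized family and verify both projective uniqueness and non-rationality by hand; I would avoid that and quote the existing construction.
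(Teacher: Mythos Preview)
Your reduction is sound but the final input you need does not come from the source you cite. You correctly observe (this is essentially Corollary~\ref{cor:non-rational not toric}) that a pure difference binomial slack ideal forces a rational realization, so it would suffice to exhibit infinitely many projectively unique $d$-polytopes \emph{without rational realizations} in some fixed dimension. However, the Adiprasito--Ziegler theorem you invoke does not assert non-rationality: their construction produces infinitely many combinatorial types of projectively unique polytopes in each dimension $d\geq 69$, but these polytopes can very well be rational. Your sentence ``this is exactly what the Perles--Shephard--type constructions deliver'' papers over the gap; varying a Perles configuration or iterating sums does not obviously yield infinitely many \emph{combinatorially distinct}, \emph{projectively unique}, \emph{non-rational} types in a \emph{single fixed} dimension, and you give no reference establishing that. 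So as written the argument is incomplete at precisely the point you flagged as the ``main obstacle''.

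The paper sidesteps this entirely with a counting argument. It combines the Adiprasito--Ziegler result (infinitely many projectively unique $d$-polytopes for $d\geq 69$) with a finiteness result from \cite{GPRT17}: in any fixed dimension there are only finitely many combinatorial classes of polytopes whose slack ideal is a pure difference binomial ideal (this comes from the connection to psd-minimality and semidefinite lifts). Pigeonhole then gives infinitely many projectively unique types whose slack ideal is not pure difference binomial, hence not toric. No irrationality is needed; the finiteness on the binomial side does all the work. If you want to salvage your approach, you would need an independent reference producing infinitely many projectively unique non-rational polytopes in bounded dimension, which is a stronger statement than what \cite{AZ15} provides.
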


\begin{proof}
In \cite{AZ15}, Adiprasito and Ziegler have shown that for $d \geq 69$ there are infinitely many projectively unique $d$-polytopes.
On the other hand, it follows from results in \cite{GPRT17} concerning semidefinite lifts of polytopes that in any dimension, there can only be finitely many combinatorial classes of polytopes whose slack ideal is a pure difference binomial ideal. 
\end{proof}


%
%

\section{The Perles polytope has a reducible slack ideal}
\label{sec:Perles}

We now consider a classical example of a projectively unique polytope with no rational realization due to Perles \cite[p.94]{Grunbaum}. This is an $8$-polytope with $12$ vertices and $34$ facets with the additional feature that it has a non-projectively unique face. It is minimal in the sense that every $d$-polytope with at most $d+3$ vertices is rationally realizable.
We will show that the Perles polytope does not have a toric slack ideal and that in fact, its slack ideal is not prime, providing the first such example.

The non-existence of rational realizations of a polytope immediately implies that its slack ideal is not toric.
This is a corollary of Theorem \ref{thm:morally 2-level}.

\begin{corollary} \label{cor:non-rational not toric}
Let $P$ be a polytope in $\RR^d$ with no rational realization. Then $I_P$  cannot be a pure difference binomial ideal and, in particular, cannot be toric.
\end{corollary}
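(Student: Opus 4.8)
The plan is to deduce this from Theorem~\ref{thm:morally 2-level} by contraposition. Suppose, for contradiction, that $I_P$ is a pure difference binomial ideal. By Corollary~\ref{cor:pure difference binomial ideals are in the toric ideal} we then have $I_P \subseteq T_P$, and hence by Theorem~\ref{thm:morally 2-level} the polytope $P$ is morally $2$-level, i.e., $S_P(\mathbbm{1}) \in \mathcal V(I_P)$. In fact, since all entries of $S_P(\mathbbm{1})$ in the support of $S_P(\xx)$ are equal to $1 > 0$, we have $S_P(\mathbbm{1}) \in \mathcal V_+(I_P)$, the positive part of the slack variety.

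The next step is to invoke the dictionary between $\mathcal V_+(I_P)$ and realizations of $P$. By the realization-space theorem quoted in Section~\ref{sec:bg}, every point of $\mathcal V_+(I_P)$ is the slack matrix of some polytope combinatorially equivalent to $P$; equivalently, any $S \in \mathcal V_+(I_P)$ satisfies the three conditions of Theorem~\ref{THM:slackconditions} and is therefore the slack matrix of a polytope in the combinatorial class of $P$. Applying this to $S = S_P(\mathbbm{1})$, whose entries are all $0$ or $1$ and hence rational, we obtain a slack matrix with rational entries. From a rational slack matrix one recovers a rational realization of $P$: the rank condition $\rank(S) = d+1$ together with $\mathbbm{1}$ in the column span lets one read off rational vertex coordinates and rational facet inequalities (concretely, one can choose a rational basis of the column space containing $\mathbbm{1}$, express the rows in that basis, and extract the $V$- and $W$-data, all over $\QQ$). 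This contradicts the hypothesis that $P$ has no rational realization. Hence $I_P$ is not a pure difference binomial ideal, and in particular, since toric ideals are pure difference binomial by \eqref{eq:toricGens}, $I_P$ is not toric.

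The only mild subtlety — and the step I would write out most carefully — is the passage from "$S_P(\mathbbm{1})$ is a rational point of $\mathcal V_+(I_P)$" to "$P$ has a rational realization." One must make sure that Theorem~\ref{THM:slackconditions} (or the construction behind it from \cite{slackmatrixpaper}) genuinely produces the realization over the same field as the entries of the slack matrix; this is where one uses that the reconstruction of $V$ and $W$ from $S$ involves only linear algebra over $\QQ$. Everything else is a direct chaining of results already established in the excerpt, so no substantial new argument is required. (Alternatively, one could phrase the whole argument purely in terms of the bijection of the realization-space theorem: a polytope with no rational realization has no rational point in $\mathcal V_+(I_P)$ at all, so $S_P(\mathbbm{1}) \notin \mathcal V(I_P)$, whence $P$ is not morally $2$-level and $I_P \not\subseteq T_P$, so $I_P$ cannot be a pure difference binomial ideal.)
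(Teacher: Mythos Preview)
Your argument follows exactly the same route as the paper's proof: assume $I_P$ is pure difference binomial, use Corollary~\ref{cor:pure difference binomial ideals are in the toric ideal} to get $I_P \subseteq T_P$, apply Theorem~\ref{thm:morally 2-level} to conclude $S_P(\mathbbm{1}) \in \mathcal V_+(I_P)$, and then observe that a rational point of $\mathcal V_+(I_P)$ forces a rational realization of $P$. The paper dispatches this last step by citing \cite[Lemma~4.1]{GMTWfirstpaper} rather than arguing it directly.

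One small inaccuracy in your write-up: it is not true that every $S \in \mathcal V_+(I_P)$ satisfies all three conditions of Theorem~\ref{THM:slackconditions}. In particular, condition~(\ref{EQ:colspan}) can fail for $S_P(\mathbbm{1})$ itself --- the paper explicitly notes (just after Definition~\ref{def:morally 2-level}) that for the bisimplex $S_P(\mathbbm{1})$ does not have $\mathbbm{1}$ in its column span. So your sketch ``choose a rational basis of the column space containing $\mathbbm{1}$'' does not go through as stated. The correct version of this step (the content of the cited lemma) extracts a rational realization from the row space of a rational rank-$(d{+}1)$ matrix with the right support, without assuming $\mathbbm{1}$ lies in the column span; you correctly flag this as the point requiring care, but the fix is slightly different from what you wrote.
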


\begin{proof}
If $P$ has no rational realization, then $S_P(\mathbbm{1})$ does not lie in the slack variety of~$P$, since a rational point in $\VV_+(I_P)$ yields a rational realization of $P$ by \cite[Lemma 4.1]{GMTWfirstpaper}.
Therefore, by Theorem~\ref{thm:morally 2-level}, $I_P$ is not contained in $T_P$. Now applying Corollary~\ref{cor:pure difference binomial ideals are in the toric ideal}, we can conclude that $I_P$ is not a pure difference binomial ideal and, in particular, is not toric.
\end{proof}

The Perles polytope $P$ is constructed in \cite[p.95]{Grunbaum} from its affine Gale diagram shown in
Figure~\ref{fig:pointconfig}. This planar configuration stands in for the vector configuration in $\RR^3$ (Gale diagram) consisting of
$12$ vectors --- the eight vectors $A,B,C,D,E,F,G,H$ indicated with black dots that have $x_3=1$ and the four vectors
$-F,-G,-H,-I$ indicated with open circles that have $x_3=-1$. This means that $P$ has $12$ vertices and is of dimension $12-3-1=8$. The facets of $P$ are in bijection with the $34$ minimal positive circuits of the Gale diagram. Computing these, we get the support of the slack matrix $S_P$ shown below.

\begin{figure}[ht!]
\begin{tikzpicture}[scale=0.5,line cap=round,line join=round,line width=.7pt]
\clip(-4.1,-1) rectangle (6.2,5.8);
{\tikzstyle{every node}=[circle,draw=black,fill=black,inner sep=0pt,minimum width=2.5pt]
\draw (0.,0.)-- (2.,0.);
\draw (2.,0.)-- (2.618033988749895,1.9021130325903064);
\draw (2.618033988749895,1.9021130325903064)-- (1.,3.077683537175253);
\draw (1.,3.077683537175253)-- (-0.6180339887498947,1.9021130325903073);
\draw (-0.6180339887498947,1.9021130325903073)-- (0.,0.);
\draw (-1.6180339887498945,4.979796569765561)-- (2.,0.);
\draw (3.6180339887498953,4.9797965697655595)-- (0.,0.);
\draw (5.236067977499787,0.)-- (-0.6180339887498947,1.9021130325903073);
\draw (-3.2360679774997907,0.)-- (2.618033988749895,1.9021130325903064);
\draw (-1.6180339887498945,4.979796569765561)-- (-0.6180339887498947,1.9021130325903073);
\draw (-1.6180339887498945,4.979796569765561)-- (1.,3.077683537175253);
\draw (1.,3.077683537175253)-- (3.6180339887498953,4.9797965697655595);
\draw (3.6180339887498953,4.9797965697655595)-- (2.618033988749895,1.9021130325903064);
\draw (2.,0.)-- (5.236067977499787,0.);
\draw (2.618033988749895,1.9021130325903064)-- (5.236067977499787,0.);
\draw (-3.2360679774997907,0.)-- (0.,0.);
\draw (-3.2360679774997907,0.)-- (-0.6180339887498947,1.9021130325903073);
\node [label=below:$E$] at (0.,0.) {};
\node [draw=black, fill=white, minimum width=5pt] at (2.,0.) {};
\node [label=below:$F$] at (2.,0.) {};
\node [draw=black, fill=white, minimum width=5pt] at (2.618033988749895,1.9021130325903064) {};
\node [label={[label distance=1mm]10:$H$}] at (2.618033988749895,1.9021130325903064) {};
\node [draw=black, fill=white, minimum width=5pt] at (-0.6180339887498947,1.9021130325903073) {};
\node [label={[label distance=1mm]170:$G$}] at (-0.6180339887498947,1.9021130325903073) {};
\node [label=above:$C$] at (-1.6180339887498945,4.979796569765561) {};
\node [label=above:$D$] at (3.6180339887498953,4.9797965697655595) {};
\node [label=below:$B$] at (5.236067977499787,0.) {};
\node [label=below:$A$] at (-3.2360679774997907,0.) {};
\node [label=above:$I$, draw=black, fill=white] at (1,1.38) {};}
\end{tikzpicture}
\vspace{-2mm}
\caption{The Perles Gale diagram.}
\label{fig:pointconfig}
\end{figure}

\[ \footnotesize{
\left[
{\arraycolsep=2.4pt\def\arraystretch{1.2}
\begin{array}{cccccccccccccccccccccccccccccccccc}
0 & 0 & 0 & \ast & \ast & \ast & 0 & 0 & 0 & 0 & 0 & 0 & 0 & \ast & \ast & \ast & \ast & \ast & \ast & \ast & \ast & 0 & 0 & 0 & 0 & 0 & 0 & 0 & 0 & 0 & 0 & 0 & 0 & 0\\
0 & 0 & 0 & \ast & 0 & 0 & \ast & \ast & \ast & 0 & 0 & 0 & 0 & \ast & \ast & \ast & \ast & 0 & 0 & 0 & 0 & \ast & \ast & \ast & \ast & 0 & 0 & 0 & 0 & 0 & 0 & 0 & 0 & 0\\
0 & 0 & 0 & 0 & 0 & 0 & \ast & 0 & 0 & \ast & \ast & 0 & 0 & \ast & \ast & 0 & 0 & \ast & \ast & 0 & 0 & \ast & 0 & 0 & 0 & \ast & \ast & 0 & 0 & 0 & 0 & 0 & 0 & 0\\
0 & 0 & 0 & 0 & \ast & 0 & 0 & 0 & 0 & 0 & 0 & \ast & \ast & 0 & 0 & \ast & \ast & 0 & 0 & \ast & 0 & 0 & \ast & \ast & 0 & 0 & 0 & \ast & \ast & \ast & 0 & 0 & 0 & 0\\
0 & 0 & 0 & 0 & 0 & 0 & 0 & \ast & 0 & \ast & 0 & \ast & 0 & 0 & 0 & 0 & 0 & 0 & 0 & 0 & 0 & \ast & \ast & 0 & 0 & \ast & 0 & \ast & \ast & 0 & \ast & \ast & \ast & 0\\
\ast & 0 & 0 & 0 & 0 & 0 & 0 & 0 & 0 & 0 & \ast & 0 & \ast & 0 & 0 & 0 & 0 & \ast & 0 & \ast & 0 & 0 & 0 & 0 & 0 & 0 & \ast & 0 & 0 & \ast & 0 & 0 & 0 & \ast\\
0 & \ast & 0 & 0 & 0 & 0 & 0 & 0 & \ast & 0 & 0 & 0 & 0 & 0 & 0 & 0 & 0 & 0 & 0 & 0 & 0 & 0 & 0 & \ast & \ast & 0 & 0 & 0 & 0 & \ast & \ast & 0 & 0 & \ast\\
0 & 0 & \ast & 0 & 0 & \ast & 0 & 0 & 0 & 0 & 0 & 0 & 0 & 0 & 0 & 0 & 0 & 0 & \ast & 0 & \ast & 0 & 0 & 0 & 0 & \ast & 0 & 0 & 0 & 0 & \ast & \ast & \ast & \ast\\
\ast & 0 & 0 & \ast & 0 & 0 & 0 & \ast & 0 & 0 & 0 & 0 & 0 & 0 & 0 & 0 & 0 & 0 & 0 & 0 & \ast & 0 & 0 & 0 & \ast & 0 & 0 & \ast & 0 & 0 & 0 & \ast & \ast & 0\\
0 & \ast & 0 & 0 & \ast & 0 & 0 & 0 & 0 & \ast & 0 & 0 & 0 & \ast & 0 & 0 & 0 & \ast & \ast & 0 & \ast & 0 & 0 & 0 & 0 & 0 & \ast & \ast & \ast & 0 & 0 & \ast & 0 & 0\\
0 & 0 & \ast & 0 & 0 & 0 & \ast & 0 & 0 & 0 & 0 & 0 & \ast & 0 & 0 & \ast & 0 & 0 & 0 & 0 & 0 & 0 & \ast & \ast & \ast & 0 & \ast & 0 & \ast & 0 & 0 & 0 & 0 & 0\\
0 & 0 & 0 & 0 & 0 & \ast & 0 & 0 & \ast & 0 & \ast & \ast & 0 & 0 & \ast & 0 & \ast & 0 & 0 & \ast & 0 & \ast & 0 & 0 & 0 & \ast & 0 & 0 & 0 & \ast & \ast & 0 & \ast & \ast
\end{array}}
\right]}
\]
\vspace{1mm}

It is straightforward to obtain $S_P(\xx)$ from the above matrix, but a direct calculation of the slack ideal of this example is challenging. Therefore, we resort to a
scaling technique that makes slack ideal computations easier.  The idea is to work with a
subvariety of the slack variety that contains a representative for every orbit under row and
column scalings. We do this by fixing as many entries as possible in $S_P(\xx)$ to one.
Having less variables, the slack ideal becomes easier to compute.
The non-incidence graph $G_P$ from Section 3 provides a systematic way to scale a maximal number of
entries in $S_P(\xx)$ to one.

\begin{lemma} \label{lem:forest}
Given a polytope $P$, we may scale the rows and columns of its slack matrix so that it has ones in the entries indexed by the edges in a maximal spanning forest $F$ of the graph $G_P$.
\end{lemma}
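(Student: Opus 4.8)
The plan is to turn the statement into a question about the solvability of a linear system, and then to solve that system explicitly by walking along the trees of $F$. First I would recall, as already used in Section~\ref{sec:Toric}, that rescaling $S_P$ by a positive diagonal matrix $D_v$ on the rows and a positive diagonal matrix $D_f$ on the columns keeps the matrix in $\VV_+(I_P)$ and does not change its support, hence does not change the graph $G_P$; so this is exactly the kind of rescaling permitted in the statement. Writing the diagonal entries of $D_v$ and $D_f$ as $e^{a_i}$ and $e^{b_j}$, and writing the positive entry $(S_P)_{ij}$ for an edge $\{i,j\}$ of $G_P$ as $e^{c_{ij}}$, the $(i,j)$ entry of $D_v S_P D_f$ equals $e^{\,a_i+b_j+c_{ij}}$. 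Thus requiring every entry indexed by an edge of $F$ to become $1$ is precisely requiring real numbers $a_1,\dots,a_v,b_1,\dots,b_f$ with $a_i+b_j=-c_{ij}$ for each edge $\{i,j\}\in F$.

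Next I would solve this system one connected component of $G_P$ at a time. If $K$ is a connected component of $G_P$, then $F\cap K$ is a spanning tree $T$ of $K$, because $F$ is a \emph{maximal} spanning forest. I would root $T$ at an arbitrary node, assign the value $0$ to that node, and then propagate outward: whenever a tree edge joins an already-assigned node to a new one, its equation $a_i+b_j=-c_{ij}$ determines the value at the new node uniquely. Since $T$ is acyclic, each node is reached exactly once and each edge equation is imposed exactly once, so no contradiction can occur, and we obtain values satisfying all edge equations coming from $T$. Carrying this out over all components of $G_P$ (and assigning $0$ to any vertex of $G_P$ not reached — there are in fact none, since every vertex of $P$ misses some facet and every facet misses some vertex) yields the required $a_i,b_j$; exponentiating recovers positive $D_v,D_f$, and $D_v S_P D_f$ then has ones in all entries indexed by $F$.

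The one delicate point, and the reason the argument succeeds, is the consistency of the linear system: its coefficient matrix is essentially the unoriented vertex–edge incidence matrix of $F$, and linear dependences among the equations correspond exactly to the (even) cycles of $G_P$ — the alternating sum of the equations around such a cycle has zero left-hand side. Because $F$ is a forest it contains no cycles, so there is no such dependence to obstruct solvability, which is exactly what the root-to-leaf propagation exploits. I expect this acyclicity bookkeeping to be the entire substance of the proof; the passage to logarithms and the propagation step are routine.
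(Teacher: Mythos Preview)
Your proof is correct and follows essentially the same approach as the paper: both root each tree of $F$, propagate scalings outward along tree edges so that each new edge equation determines the scaling at the newly reached node, and invoke acyclicity of $F$ to guarantee that no previously fixed entry is disturbed. Your passage to logarithms to recast the multiplicative scalings as an additive linear system is a cosmetic reformulation of the paper's direct multiplicative argument, not a genuinely different route.
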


\begin{proof}
For every tree $T$ in the forest, pick a vertex to be its root, and orient the edges away from it. Now for each tree, pick the edges leaving the root and set to one the corresponding entry of $S_P$ by scaling the row or column corresponding to the destination vertex of the edge. Continue the process with the edges leaving the vertices just used and so on, until the trees are exhausted. Notice that once we fix an entry, the only way for us to change it again is by scaling either its row or column, which would mean in the graph that we would revisit one of the nodes of its corresponding edge. But this would imply the existence of a cycle in $F$, so by the time this process ends we have precisely the intended variables set to one.
\end{proof}

Even after the above scaling trick, the symbolic slack matrix of the Perles polytope has $75$ variables
which is challenging to work with. Therefore, we will work with a subideal of $I_P$.

Consider the following submatrix of $S_P(\xx)$ coming from its first 13 columns.
$$\begin{bmatrix}
0 & 0 & 0 & x_{1} & x_{2} & x_{3} & 0 & 0 & 0 & 0 & 0 & 0 & 0\\
0 & 0 & 0 & x_4 & 0 & 0 & x_5 & x_6 & x_7 & 0 & 0 & 0 & 0\\
0 & 0 & 0 & 0 & 0 & 0 & x_8 & 0 & 0 & x_9 & x_{10} & 0 & 0\\
0 & 0 & 0 & 0 & x_{11} & 0 & 0 & 0 & 0 & 0 & 0 & x_{12} & x_{13}\\
0 & 0 & 0 & 0 & 0 & 0 & 0 & x_{14} & 0 & x_{15} & 0 & x_{16} & 0\\
x_{17} & 0 & 0 & 0 & 0 & 0 & 0 & 0 & 0 & 0 & x_{18} & 0 & x_{19}\\
0 & x_{20} & 0 & 0 & 0 & 0 & 0 & 0 & x_{21} & 0 & 0 & 0 & 0\\
0 & 0 & x_{22} & 0 & 0 & x_{23} & 0 & 0 & 0 & 0 & 0 & 0 & 0\\
x_{24} & 0 & 0 & x_{25} & 0 & 0 & 0 & x_{26} & 0 & 0 & 0 & 0 & 0\\
0 & x_{27} & 0 & 0 & x_{28} & 0 & 0 & 0 & 0 & x_{29} & 0 & 0 & 0\\
0 & 0 & x_{30} & 0 & 0 & 0 & x_{31} & 0 & 0 & 0 & 0 & 0 & x_{32}\\
0 & 0 & 0 & 0 & 0 & x_{33} & 0 & 0 & x_{34} & 0 & x_{35} & x_{36} & 0
\end{bmatrix}.$$

The ideal of $10 \times 10$ minors of this submatrix, saturated by all its variables is clearly a subideal of $I_P$.
Using the scaling lemma we first set $x_i=1$ for $i = 1,4,5,6,7,8,9,10,13,15,16,17,18,21,22,26,27,28,29,30,31,32,33,35$. The resulting scaled slack subideal is:
\begin{gather*}
\langle \boldsymbol{x_{36}^2+x_{36}-1}, x_{34}-x_{36}-1, x_{25}-x_{36}, x_{24}-x_{36}, x_{23}-1,x_{20}-x_{36},\\
x_{19}-x_{36},x_{14}-x_{36}-1, x_{12}-x_{36},x_{11}-1,x_{3}-1,x_{2}-x_{36}-1 \rangle.
\end{gather*}
This means that
after scaling, the first 13 columns of every matrix $S_P(\mathbf{s})$ obtained from $\mathbf{s} \in \mathcal{V}(I_P)$ with
full support must have the form
\begin{equation} \begin{bmatrix}
0 & 0 & 0 & 1 & \alpha+1 & 1 & 0 & 0 & 0 & 0 & 0 & 0 & 0\\
0 & 0 & 0 & 1 & 0 & 0 & 1 & 1 & 1 & 0 & 0 & 0 & 0\\
0 & 0 & 0 & 0 & 0 & 0 & 1 & 0 & 0 & 1 & 1 & 0 & 0\\
0 & 0 & 0 & 0 & 1 & 0 & 0 & 0 & 0 & 0 & 0 & \alpha & 1\\
0 & 0 & 0 & 0 & 0 & 0 & 0 & \alpha+1 & 0 & 1 & 0 & 1 & 0\\
1 & 0 & 0 & 0 & 0 & 0 & 0 & 0 & 0 & 0 & 1 & 0 & \alpha\\
0 & \alpha & 0 & 0 & 0 & 0 & 0 & 0 & 1 & 0 & 0 & 0 & 0\\
0 & 0 & 1 & 0 & 0 & 1 & 0 & 0 & 0 & 0 & 0 & 0 & 0\\
\alpha & 0 & 0 & \alpha & 0 & 0 & 0 & 1 & 0 & 0 & 0 & 0 & 0\\
0 & 1 & 0 & 0 & 1 & 0 & 0 & 0 & 0 & 1 & 0 & 0 & 0\\
0 & 0 & 1 & 0 & 0 & 0 & 1 & 0 & 0 & 0 & 0 & 0 & 1\\
0 & 0 & 0 & 0 & 0 & 1 & 0 & 0 & \alpha+1 & 0 & 1 & \alpha & 0
\end{bmatrix} \label{EQ:submatrix}
\end{equation}
where $\alpha = \frac{-1 \pm \sqrt{5}}{2}$ is a root of $x^2+x-1$.  {One can check that there is a unique way to extend the above $12 \times 13$ matrix to a
$12\times 34$ matrix with rank nine and the support of the Perles slack matrix, provided we scale one variable to one in each of the new columns, as allowed
by Lemma~\ref{lem:forest}. The resulting parametrized matrix is shown in
Figure~\ref{FIG:perles_slack}. Up to scaling, the two matrices corresponding to the two values of $\alpha$ are therefore the only elements in the slack variety.}


\begin{sidewaysfigure}
\centering
\vspace{12cm}
\[
\footnotesize{
\begin{tikzpicture}[
every left delimiter/.style={xshift=.25em},
every right delimiter/.style={xshift=-.25em},
inner sep=1pt,
label anchor/.style={tikz@label@post/.append style={anchor=#1}},
matrixnodes/.style={text width=4.3mm,minimum height=4.2mm,minimum width=4mm,anchor=center,
      align=center,inner sep=0pt}]
\matrix(S)[matrix of math nodes,column sep=-\pgflinewidth, row sep=-\pgflinewidth, nodes={matrixnodes},
      left delimiter={[},right delimiter={]}]{
0 &[-1ex] 0 &[-1ex] 0 &[-1ex] 1 &[1ex] \alpha\!+\!1 &[2ex] 1 &[-1ex] 0 &[1ex] 0 &[2ex] 0 &[2ex] 0 &[-1ex] 0 &[-1ex] 0 &[-1ex] 0 &[1ex] 1 &[2ex] 1 &[-1ex] \alpha &[1ex] \alpha\!+\!1 &[2ex] \alpha &[3ex] 1 &[2ex] 1 &[1ex] \alpha\!+\!1 &[2ex] 0 &[1ex] 0 &[2ex] 0 &[3ex] 0 &[3ex] 0 &[3ex] 0 &[2ex] 0 &[2ex] 0 &[2ex] 0 &[3ex] 0 &[2ex] 0 &[3ex] 0 &[2ex] 0&[2ex]\\
0 &  0 &  0 &  1 &  0 &  0 &  1 &  1 &  1 &  0 &  0 &  0 & 0 &  1\!-\!\alpha &  1 &  \alpha &  \alpha &  0 &  0 &  0 &  0 &  \alpha &  1\!-\!\alpha &  1 &  \alpha\!+\!2 &  0 &  0 &  0 &  0 &  0 &  0 &  0 &  0 & 0 \\
0 &  0 &  0 &  0 &  0 &  0 &  1 &  0 &  0 &  1 &  1 &  0 &  0 &  1 &  1 &  0 &  0 &  1 &  \alpha &  0 &  0 &  1 &  0 & 0 &  0 &  1\!-\!\alpha &  \alpha\!+\!1 &  0 &  0 &  0 &  0 &  0 &  0 &  0 \\
0 &  0 &  0 &  0 &  1 &  0 &  0 &  0 &  0 &  0 &  0 & \alpha &  1 &  0 &  0 &  1 &  1 &  0 &  0 &  1 &  0 &  0 &  1 &  1 &  0 &  0 &  0 &  1 &  1 &  1 &  0 &  0 &  0 &  0 \\
0 & 0 &  0 &  0 &  0 &  0 &  0 &  \alpha\!+\!1 &  0 &  1 &  0 &  1 &  0 &  0 &  0 &  0 &  0 &  0 &  0 &  0 &  0 &  1 &  \alpha &  0 &  0 & 1 &  0 &  \alpha\!+\!2 &  1 &  0 &  1 &  \alpha\!+\!2 &  \alpha\!+\!1 &  0 \\
1 &  0 &  0 &  0 &  0 &  0 &  0 &  0 &  0 &  0 &  1 &  0 &  \alpha & 0 &  0 &  0 &  0 &  1\!-\!\alpha &  0 &  \alpha &  0 &  0 &  0 &  0 &  0 &  0 &  1 &  0 &  0 &  \alpha\!+\!1 &  0 &  0 &  0 &  1 \\
0 & \alpha &  0 &  0 &  0 &  0 &  0 &  0 &  1 &  0 &  0 &  0 &  0 &  0 &  0 &  0 &  0 &  0 &  0 &  0 &  0 &  0 &  0 &  1\!-\!\alpha & 1 &  0 &  0 &  0 &  0 &  1 &  1\!-\!\alpha &  0 &  0 &  1 \\
0 &  0 &  1 &  0 &  0 &  1 &  0 &  0 &  0 &  0 &  0 &  0 &  0 & 0 &  0 &  0 &  0 &  0 &  1\!-\!\alpha &  0 &  1 &  0 &  0 &  0 &  0 &  \alpha &  0 &  0 &  0 &  0 &  1 &  1 &  1 &  1 \\
\alpha &  0 & 0 &  \alpha &  0 &  0 &  0 &  1 &  0 &  0 &  0 &  0 &  0 &  0 &  0 &  0 &  0 &  0 &  0 &  0 &  1\!-\!\alpha &  0 &  0 &  0 &  1 & 0 &  0 &  \alpha &  0 &  0 &  0 &  1 &  1\!-\!\alpha &  0 \\
0 &  1 &  0 &  0 &  1 &  0 &  0 &  0 &  0 &  1 &  0 &  0 &  0 &  1 & 0 &  0 &  0 &  1 &  1 &  0 &  1 &  0 &  0 &  0 &  0 &  0 &  1 &  1 &  1\!-\!\alpha &  0 &  0 &  1 &  0 &  0 \\
0 &  0 &  1 & 0 &  0 &  0 &  1 &  0 &  0 &  0 &  0 &  0 &  1 &  0 &  0 &  1 &  0 &  0 &  0 &  0 &  0 &  0 &  1 &  \alpha\!+\!1 &  \alpha\!+\!1 &  0 & \alpha &  0 &  \alpha &  0 &  0 &  0 &  0 &  0 \\
0 &  0 &  0 &  0 &  0 &  1 &  0 &  0 &  \alpha\!+\!1 &  0 &  1 &  \alpha &  0 &  0 &  1 & 0 &  1 &  0 &  0 &  1 &  0 &  1 &  0 &  0 &  0 &  1 &  0 &  0 &  0 &  \alpha\!+\!2 &  \alpha\!+\!1 &  0 &  1 &  \alpha\!+\!2\\};
{\tikzstyle{every node}=[font=\footnotesize,label anchor=center,left,xshift=-2mm]
\node at (S-1-1.west) {$A$};
\node at (S-2-1.west) {$B$};
\node at (S-3-1.west) {$C$};
\node at (S-4-1.west) {$D$};
\node at (S-5-1.west) {$E$};
\node at (S-6-1.west) {$F$};
\node at (S-7-1.west) {$G$};
\node at (S-8-1.west) {$H$};
\node at (S-9-1.west) {$-F$};
\node at (S-10-1.west) {$-G$};
\node at (S-11-1.west) {$-H$};
\node at (S-12-1.west) {$-I$};}
\end{tikzpicture}
}\]
\caption{Slack matrix of the Perles polytope}
\label{FIG:perles_slack}
\end{sidewaysfigure}


\begin{theorem}
The slack ideal of the Perles polytope is not prime.
\end{theorem}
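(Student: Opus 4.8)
The plan is to read off two distinct irreducible components of $\VV(I_P)$ from the parametrization obtained just above. The computation preceding the statement shows that every full-support point of $\VV(I_P)$ is, up to row and column scalings, one of the two matrices $S_{\alpha_1},S_{\alpha_2}$ displayed in Figure~\ref{FIG:perles_slack}, where $\alpha_1=\frac{-1+\sqrt{5}}{2}$ and $\alpha_2=\frac{-1-\sqrt{5}}{2}$ are the two roots of $x^2+x-1$. Since $I_P$ is saturated with respect to $\prod_i x_i$, the variety $\VV(I_P)$ equals the Zariski closure of $\VV(I_P)\cap(\CC^*)^t$, so $\VV(I_P)=Z_1\cup Z_2$, where $Z_i$ denotes the closure of the scaling orbit $\mathcal{O}_i:=\{D_v S_{\alpha_i}D_f: D_v\in(\CC^*)^v,\ D_f\in(\CC^*)^f\}$. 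Each $\mathcal{O}_i$ is a coset of the subtorus of $(\CC^*)^t$ that records the action of row and column scalings on the nonzero entries, hence is irreducible and closed in $(\CC^*)^t$; consequently each $Z_i$ is irreducible and $Z_i\cap(\CC^*)^t=\mathcal{O}_i$.

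The only substantive point left is that $Z_1\neq Z_2$, equivalently $\mathcal{O}_1\neq\mathcal{O}_2$; since cosets of a subgroup are equal or disjoint, this amounts to showing that $S_{\alpha_1}$ and $S_{\alpha_2}$ are not row and column scalings of one another, and for that it suffices to exhibit one scaling invariant that separates them. I would use a chordless $4$-cycle of $G_P$ supported on two vertices and two facets, which produces an invariant of the form $\xx^{C^+}/\xx^{C^-}$, a quotient of two degree-two monomials that is unchanged under all row and column scalings. Such a cycle is already visible in the first thirteen columns of $S_P(\xx)$, for instance the $K_{2,2}$ on $v_2,v_9$ and $F_4,F_8$; in the scaled coordinates of~\eqref{EQ:submatrix} the corresponding invariant is $x_4x_{26}/(x_6x_{25})$, which equals $1/\alpha$ on the orbit of $S_\alpha$. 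Since $1/\alpha_1\neq 1/\alpha_2$, this invariant takes different values on $\mathcal{O}_1$ and $\mathcal{O}_2$, so $\mathcal{O}_1\cap\mathcal{O}_2=\varnothing$ and $Z_1\neq Z_2$. Then $Z_1\cap(\CC^*)^t=\mathcal{O}_1\neq\mathcal{O}_2$ forces $Z_2\not\subseteq Z_1$, and symmetrically $Z_1\not\subseteq Z_2$, so $\VV(I_P)=Z_1\cup Z_2$ is reducible and $I_P$ is therefore not prime.

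I expect the real work to be computational rather than conceptual: everything hinges on the ``unique extension'' step behind Figure~\ref{FIG:perles_slack} --- that the block in~\eqref{EQ:submatrix} extends in exactly one way to a rank-nine matrix with the support of $S_P$ --- which is a finite but nontrivial elimination best carried out with computer algebra, together with the routine checks that the chosen $4$-cycle is chordless in $G_P$ and that scaling orbits are torus cosets. As a bonus, since $S_{\alpha_2}$ has negative entries, only $Z_1$ meets $\RR^t_{>0}$, so the same decomposition shows that $\VV_+(I_P)$ is not Zariski dense in $\VV(I_P)$, unlike in the toric case.
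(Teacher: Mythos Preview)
Your argument is correct. The paper takes a closely related but more algebraic route: rather than decomposing $\VV(I_P)$ into two torus-orbit closures, it exhibits an explicit witness to non-primality, the polynomial
\[
f(\xx)=(x_{10}x_{15}x_{36})^2 + (x_{10}x_{15}x_{36})(x_{9}x_{16}x_{35}) - (x_{9}x_{16}x_{35})^2,
\]
which is row- and column-homogeneous and evaluates to $\alpha^2+\alpha-1=0$ on the parametrized matrix of Figure~\ref{FIG:perles_slack}, hence vanishes on all of $\VV(I_P)$, while each linear factor $x_{10}x_{15}x_{36}-\alpha_i\,x_{9}x_{16}x_{35}$ fails to vanish on one of the two orbits. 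This is really your scaling-invariant idea in algebraic dress: the ratio $x_{10}x_{15}x_{36}/(x_{9}x_{16}x_{35})$ is a cycle invariant taking the two values $\alpha_1,\alpha_2$. Your version makes the geometry of the decomposition $\VV(I_P)=Z_1\cup Z_2$ explicit and explains why the pieces are irreducible (closures of torus cosets), at the modest cost of invoking the standard facts that saturation gives $\VV(I_P)=\overline{\VV(I_P)\cap(\CC^*)^t}$ and that torus cosets are irreducible and closed in $(\CC^*)^t$; the paper's version is more self-contained and immediately produces a certificate. Your length-four cycle invariant $x_4x_{26}/(x_6x_{25})$ is in fact simpler than the paper's length-six one and would equally yield a degree-four witness $(x_4x_{26})^2 - (x_4x_{26})(x_6x_{25}) - (x_6x_{25})^2$ in place of the paper's degree-six $f$. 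As you note, both arguments rest on the computational ``unique extension'' claim behind Figure~\ref{FIG:perles_slack}, which the paper also takes as given.
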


\begin{proof}

{Let us consider the polynomial
\begin{align*}
f(\xx) & = (x_{10}x_{15}x_{36})^2 + (x_{10}x_{15}x_{36})(x_{9}x_{16}x_{35}) - (x_{9}x_{16}x_{35})^2 \\
	& = (x_{10}x_{15}x_{36} - \alpha_1x_9x_{16}x_{35})(x_{10}x_{15}x_{36} - \alpha_2x_9x_{16}x_{35}),
\end{align*}
where $\alpha_1 = \frac{-1+\sqrt{5}}{2}, \alpha_2 = \frac{-1-\sqrt{5}}{2}$ are the roots of $x^2+x-1$.
We see that the linear factor of $f(\xx)$ containing $\alpha_1$ will not vanish on the submatrix \eqref{EQ:submatrix} when we set $\alpha = \alpha_2$, and vice versa. Therefore neither of the linear factors will vanish on the slack variety. On the other hand, one can check that evaluating $f(\xx)$ on the matrix in
Figure~\ref{FIG:perles_slack} reduces it to $\alpha^2 + \alpha -1$ which is zero. Since $f(\xx)$ is homogeneous with respect to each row and column of the matrix, it will also vanish on the whole slack variety. Therefore, the vanishing ideal of the slack variety is not prime which implies that
$I_P$ is not prime.}
\end{proof}


\section{Conclusion} \label{sec:conclusion}

We have shown that the slack ideal of a polytope $P$ may not be prime. However, 
the following question remains.

\begin{problem}
Is $I_P$ a radical ideal? If not, what are the simplest counterexamples?
\end{problem}

{We have seen in Section~\ref{sec:Perles} that $\VV_+(I_P)$ need not be Zariski dense in the slack variety $\VV(I_P)$, and hence $I_P$ is not always the vanishing ideal of $\VV_+(I_P)$. }
In the case that $I_P$ is toric, we know that $I_P$ is  indeed the vanishing ideal of $\VV_+(I_P)$, providing a perfect correspondence between algebra and geometry.
Many further questions remain. In particular, what sort of polytope combinatorics lead to simple
algebraic structure in slack ideals?

\begin{problem}
What conditions on $P$ make its slack ideal toric, or pure difference binomial, or binomial?
\end{problem}

In fact, so far, we have not been able to find any slack ideal that is in one of these classes but not in the others.

\begin{problem}
Is any of the inclusions
\[
\textup{toric}  \subseteq \textup{pure difference binomial}  \subseteq \textup{binomial}
\]
of classes of slack ideals strict?
\end{problem}

 We also characterized the toric slack ideals that come from projectively unique polytopes as being $T_P$, the toric ideal of $G_P$, the graph of vertex-facet non-incidences of $P$. Such slack ideals were called graphic.
 The fact that testing and certifying projective uniqueness is easy for toric slack ideals is very interesting, as that is in general a hard problem. This raises the question of finding other classes of polytopes for which one can certify projective uniqueness easily.

\begin{problem}
Is there another class of polytopes, beyond those with graphic slack ideals, for which one can characterize
projective uniqueness?
\end{problem}

Apart from these concrete questions, many others could be formulated. There are, in particular, two general directions of study
that can potentially be very fruitful: strengthening the correspondence between algebraic invariants and combinatorial properties, and revisiting the literature on realization spaces in our new language
to see if further insights can be gained or open questions can be answered.

\bibliographystyle{alpha}
\bibliography{all}

\newcommand{\etalchar}[1]{$^{#1}$}
\begin{thebibliography}{GMTW17}

\bibitem[AZ15]{AZ15}
K.~A. Adiprasito and G.~M. Ziegler.
\newblock Many projectively unique polytopes.
\newblock {\em Invent. Math.}, 199(3):581--652, 2015.

\bibitem[BFF{\etalchar{+}}]{BFFFMP17}
A.~Bohn, Y.~Faenza, S.~Fiorini, V.~Fisikopoulos, M.~Macchia, and K.~Pashkovich.
\newblock Enumeration of 2-level polytopes.
\newblock {\em To appear in Math. Prog. Comp.}

\bibitem[BW18]{BW18}
M.~Brandt and A.~Wiebe.
\newblock The slack realization space of a matroid.
\newblock Preprint, https://arxiv.org/abs/1804.05264, 2018.

\bibitem[Dev17]{SageMath}
The~Sage Developers.
\newblock {\em {S}ageMath, the {S}age {M}athematics {S}oftware {S}ystem}, 2017.
\newblock {http://www.sagemath.org}.

\bibitem[Dob14]{D14}
M.~G. Dobbins.
\newblock Realizability of polytopes as a low rank matrix completion problem.
\newblock {\em Discrete Comput. Geom.}, 51:761--778, 2014.

\bibitem[ES96]{ES96}
D.~Eisenbud and B.~Sturmfels.
\newblock Binomial ideals.
\newblock {\em Duke Math. J.}, 84(1):1--45, 1996.

\bibitem[FFM16]{FFM16}
S.~Fiorini, V.~Fisikopoulos, and M.~Macchia.
\newblock Two-level polytopes with a prescribed facet.
\newblock In {\em Combinatorial Optimization}, volume 9849 of {\em Lecture
  Notes in Comput. Sci.}, pages 285--296. Springer, [Cham], 2016.

\bibitem[GGK{\etalchar{+}}13]{slackmatrixpaper}
J.~Gouveia, R.~Grappe, V.~Kaibel, K.~Pashkovich, R.~Z. Robinson, and R.~R.
  Thomas.
\newblock Which nonnegative matrices are slack matrices?
\newblock {\em Linear Algebra Appl.}, 439(10):2921--2933, 2013.

\bibitem[GMTW17]{GMTWfirstpaper}
J.~Gouveia, A.~Macchia, R.~R. Thomas, and A.~Wiebe.
\newblock The slack realization space of a polytope.
\newblock Preprint, https://arxiv.org/abs/1708.04739, 2017.

\bibitem[GPRT17]{GPRT17}
J.~Gouveia, K.~Pashkovich, R.~Z. Robinson, and R.~R. Thomas.
\newblock Four-dimensional polytopes of minimum positive semidefinite rank.
\newblock {\em J. Combin. Theory Ser. A}, 145:184--226, 2017.

\bibitem[GRT13]{GRT}
J.~Gouveia, R.~Z. Robinson, and R.~R. Thomas.
\newblock Polytopes of minimum positive semidefinite rank.
\newblock {\em Discrete \& Computational Geometry}, 50(3):679--699, 2013.

\bibitem[Gr{\"u}03]{Grunbaum}
B.~Gr{\"u}nbaum.
\newblock {\em Convex {P}olytopes}.
\newblock Springer GTM, New York, 2003.
\newblock Second Edition.

\bibitem[GS]{M2}
D.~R. Grayson and M.~E. Stillman.
\newblock Macaulay 2, a software system for research in algebraic geometry.
\newblock Available at http://www.math.uiuc.edu/Macaulay2/.

\bibitem[GS17]{GS17}
F.~Grande and R.~Sanyal.
\newblock Theta rank, levelness, and matroid minors.
\newblock {\em J. Combin. Theory Ser. B}, 123:1--31, 2017.

\bibitem[Map]{Maple}
Maple 2018.
\newblock Maplesoft, a division of Waterloo Maple Inc., Waterloo, Ontario.
  Available at http://www.maplesoft.com/products/maple.

\bibitem[McM76]{McMull}
P.~McMullen.
\newblock Constructions for projectively unique polytopes.
\newblock {\em Discrete Math.}, 14(4):347--358, 1976.

\bibitem[OH99]{OH99}
H.~Ohsugi and T.~Hibi.
\newblock Toric ideals generated by quadratic binomials.
\newblock {\em J. Algebra}, 218(2):509--527, 1999.

\bibitem[Sta80]{StanleyCompressed}
R.~P. Stanley.
\newblock Decompositions of rational convex polytopes.
\newblock {\em Ann. Discrete Math.}, 6:333--342, 1980.
\newblock Combinatorial mathematics, optimal designs and their applications
  (Proc. Sympos. Combin. Math. and Optimal Design, Colorado State Univ., Fort
  Collins, Colo., 1978).

\bibitem[Stu96]{Stu96}
B.~Sturmfels.
\newblock {\em Gr\"obner Bases and Convex Polytopes}, volume~8 of {\em
  University Lecture Series}.
\newblock American Mathematical Society, Providence, RI, 1996.

\bibitem[Vil15]{Vil15}
R.~H. Villarreal.
\newblock {\em Monomial Algebras}.
\newblock Monographs and Research Notes in Mathematics. CRC Press, Boca Raton,
  FL, second edition, 2015.

\bibitem[Yan91]{Yannakakis}
M.~Yannakakis.
\newblock Expressing combinatorial optimization problems by linear programs.
\newblock {\em J. Comput. System Sci.}, 43(3):441--466, 1991.

\end{thebibliography}
\end{document}